\newcommand{\spacedcdot}{{\,\cdot\,}}
\newcommand{\Q}{{\mathbb{Q}}}
\newcommand{\C}{{\mathbb{C}}}
\newcommand{\R}{{\mathbb{R}}}
\newcommand{\Z}{{\mathbb{Z}}}
\newcommand{\diag}{{\mathrm{diag}}}
\newcommand{\M}{{\mathrm{M}}}
\newcommand{\GL}{{\mathrm{GL}}}
\newcommand{\SO}{{\mathrm{SO}}}
\newcommand{\SL}{{\mathrm{SL}}}
\newcommand{\U}{{\mathrm{U}}}
\DeclareMathOperator{\Hol}{{\mathrm{Hol}}}
\DeclareMathOperator{\Ind}{{\mathrm{Ind}}}
\newcommand{\calD}{{\mathcal{D}}}
\newcommand{\calF}{{\mathcal{F}}}
\newcommand{\calH}{{\mathcal{H}}}
\newcommand{\calM}{{\mathcal{M}}}
\newcommand{\calS}{{\mathcal{S}}}
\providecommand{\abs}[1]{\left\lvert#1\right\rvert}
\providecommand{\norm}[1]{\left\lVert#1\right\rVert}
\providecommand{\scal}[2]{\left<#1,#2\right>}
\newtheorem{theorem}{Theorem}[section]
\newtheorem{lemma}[theorem]{Lemma}
\newtheorem{proposition}[theorem]{Proposition}
\newtheorem{corollary}[theorem]{Corollary}
\theoremstyle{definition}
\theoremstyle{remark}
\newtheorem{remark}[theorem]{Remark}
\numberwithin{equation}{section}
\title{ Non-vanishing of vector-valued Poincar\'e series }
\author{Sonja \v Zunar}
\address{ Department of Mathematics,
	Faculty of Science,
	University of Zagreb,
	Bijeni\v cka 30,
	10000 Zagreb,
	Croatia}
\email{szunar@math.hr}
\subjclass[2010]{11F03, 11F99}
\thanks{The author acknowledges the Croatian Science Foundation grant IP-2018-01-3628.}
\keywords{Vector-valued modular forms, Poincar\'e series}
\begin{document}
\maketitle

\begin{abstract}
	We prove a vector-valued version of Mui\'c's integral non-vanishing criterion for Poincar\'e series on the upper half-plane $ \calH $. Moreover, we give an accompanying result on the construction of vector-valued modular forms in the form of Poincar\'e series. As an application of these results, we construct and study the non-vanishing of the classical and elliptic vector-valued Poincar\'e series.
\end{abstract}

\section{Introduction}

Vector-valued modular forms (VVMFs) have prominent applications in analytic number theory \cite{borcherds, borcherds_gross, eichler_zagier, selberg} and the theory of vertex operator algebras \cite{dong_li_mason,milas,miyamoto,zhu}. Although their usefulness was noticed already in the 1960s by A.\ Selberg \cite{selberg}, the theory of VVMFs was established in a systematic way only in the early 2000s by M.\ Knopp and G.\ Mason \cite{km_generalized, km_fourier, km_poincare}. Ever since, the theory has been steadily developing \cite{bantay1, bantay_gannon, gannon, marks, marks_mason, mason_2dim, mason_diff}, with many new results in the recent years \cite{bantay2, candelori_franc, franc_mason, franc_mason2, saber_sebbar}.

Let us fix a multiplier system $ v:\SL_2(\Z)\to\C_{\abs z=1} $ of weight $ k\in\R $. We recall that a VVMF of weight $ k $ for $ \SL_2(\Z) $ with respect to a representation $ \rho:\SL_2(\Z)\to\GL_p(\C) $ is a $ p $-tuple $ F=(F_1,\ldots,F_p) $ of holomorphic functions on $ \calH:=\C_{\Im(z)>0} $ that have suitable Fourier expansions, such that
\[ F\big|_k\gamma=\rho(\gamma)F,\qquad\gamma\in\SL_2(\Z), \]
where elements of $ \C^p $ are regarded as column-vectors, and $ \big|_k $ denotes the standard right action (depending on $ k $ and $ v $) of $ \SL_2(\Z) $ on the set $ \left(\C^p\right)^\calH $ of functions $ \calH\to\C^p $ (see \eqref{eq:006}).

Similarly as in the theory of classical modular forms (see, e.g., \cite[\S2.6]{miyake}), one of the simplest ways to construct a VVMF is to define it as the sum of a Poincar\'e series
\[ P_{\Lambda\backslash\SL_2(\Z),\rho}f:=\sum_{\gamma\in\Lambda\backslash\SL_2(\Z)}\rho(\gamma)^{-1}f\big|_k\gamma, \]
where $ \Lambda $ is a subgroup of $ \SL_2(\Z) $, and $ f $ is a suitable function $ \calH\to\C^p $ (cf.\ \cite[\S3]{km_poincare}). In the case when the constructed VVMF is cuspidal, the question whether it vanishes identically is non-trivial. In fact, it has no complete answer even in the scalar-valued case, although in that case it was recognized as interesting as early as H.\ Poincar\'e \cite[p.~249]{poincare}. In the scalar-valued case, most known approaches to addressing this question aim at individual families of Poincar\'e series and are based on estimates of their Fourier coefficients \cite{kohnen,lehner,rankin}. A different, more general approach was discovered by G.\ Mui\'c in 2009, when he proved an integral non-vanishing criterion for Poincar\'e series on unimodular locally compact Hausdorff groups \cite[Theorem 4.1]{muicMathAnn}, with applications in the theory of automorphic forms and automorphic representations (see, e.g., \cite{grobner}). As a corollary, he obtained a criterion for the non-vanishing of Poincar\'e series of integral weight on $ \calH $ \cite[Lemma 3.1]{muicIJNT}, applied it to several families of cusp forms \cite{muicJNT,muicIJNT,muicLFunk}, and we extended his results to the half-integral weight case \cite{zunarGlas,zunarManu,zunarRama}. 

In this paper, we prove a vector-valued version (Theorem \ref{thm:019}) of Mui\'c's integral non-vanishing criterion for Poincar\'e series on $ \calH $ and use it to study the non-vanishing of two families of cuspidal VVMFs, which we call, respectively, the classical and elliptic vector-valued Poincar\'e series, in analogy with their scalar-valued versions studied by H.\ Petersson \cite{petersson}. We also prove an accompanying result (Proposition \ref{prop:010}) on the construction of VVMFs in the form of vector-valued Poincar\'e series (VVPSs). Let us emphasize that our results apply only to the case when the representation $ \rho $ is unitary. Namely, the unitarity of $ \rho $ is indispensable in the computations, involving integrals and Poincar\'e series, that are at the heart of our proofs (see, e.g., the third equality in \eqref{eq:021}). On the other hand, a careful reader will notice that in the special case when $ p=1 $ and $ \rho $ is the trivial representation, we obtain results on scalar-valued Poincar\'e series on $ \calH $ of arbitrary real weight, while in the previous work on integral non-vanishing criteria only the integral and half-integral weights were considered.

The paper is organized as follows. After introducing some basic notation in Section \ref{sec:002}, in Section \ref{sec:003} we introduce vector spaces of VVMFs to be studied in this paper. In this, we essentially follow \cite{km_poincare}, the only difference being that whereas in \cite{km_poincare} only VVMFs for $ \SL_2(\Z) $ are considered, we work with VVMFs for a general subgroup $ \Gamma $ of finite index in $ \SL_2(\Z) $. As is well known, this slight generalization does not enlarge the class of considered VVMFs in a substantial way (see Lemma \ref{lem:005}), but it greatly simplifies the notation in subsequent sections.

In Section \ref{sec:004}, we prove a result on the construction of VVMFs in the form of VVPSs (Proposition \ref{prop:010}). In Section \ref{sec:005}, we prove our integral non-vanishing criterion for VVPSs (Theorem \ref{thm:019}). We end the paper by Sections \ref{sec:006} and \ref{sec:007}, in which we apply our results to the classical and elliptic VVPSs, respectively.\bigskip

I would like to thank Marcela Hanzer and Goran Mui\'c for their support and useful comments. The work on this paper was in part conducted while I was a visitor at the Faculty of Mathematics, University of Vienna. I would like to thank Harald Grobner and the University of Vienna for their hospitality.

\section{Basic notation}\label{sec:002}

Throughout the paper, let $ i:=\sqrt{-1}\in\C $ and
\[ z^k:=\abs{z}^k\,e^{ik\arg(z)},\qquad z\in\C^\times,\ \arg(z)\in\left]-\pi,\pi\right],\ k\in\R. \]

Let $ \calH:=\C_{\Im(z)>0} $. The group $ \SL_2(\R) $ acts on $ \calH\cup\R\cup\left\{\infty\right\} $ by linear fractional transformations:
\begin{equation}\label{eq:002}
g.\tau:=\frac{a\tau+b}{c\tau+d},\qquad g=\begin{pmatrix}a&b\\c&d\end{pmatrix}\in\SL_2(\R),\ \tau\in\calH\cup\R\cup\left\{\infty\right\}.
\end{equation}
Defining $ j:\SL_2(\R)\times\calH\to\C $, 
\[ j(g,\tau):=c\tau+d,\qquad g=\begin{pmatrix}a&b\\c&d\end{pmatrix}\in\SL_2(\R),\ \tau\in\calH, \]
we recall that
\begin{equation}\label{eq:007}
\Im(g.\tau)=\frac{\Im(\tau)}{\abs{j(g,\tau)}^2},\qquad g\in\SL_2(\R),\ \tau\in\calH.
\end{equation}

Throughout the paper, we fix $ p\in\Z_{>0} $, $ k\in\R $ and a unitary multiplier system $ v $ for $ \SL_2(\Z) $ of weight $ k $, i.e., a function $ v:\SL_2(\Z)\to\C_{\abs z=1} $ such that the function $ \mu:\SL_2(\Z)\times\calH\to\C $,
\[ \mu(\gamma,\tau):=v(\gamma)\,j(\gamma,\tau)^k, \]
is an automorphic factor, in the sense that
\[ \mu(\gamma_1\gamma_2,\tau)=\mu(\gamma_1,\gamma_2.\tau)\,\mu(\gamma_2,\tau),\qquad\gamma_1,\gamma_2\in\SL_2(\Z),\ \tau\in\calH. \]
Following \cite{km_poincare}, we impose on $ v $ the nontriviality condition
\begin{equation}\label{eq:001}
v(-I_2)=(-1)^{-k}
\end{equation}
and, writing $ T:=\begin{pmatrix}1&1\\0&1\end{pmatrix} $, define $ \kappa\in\left[0,1\right[ $ by the condition
\[ v(T)=e^{2\pi i\kappa}. \]

The group $ \SL_2(\Z) $ acts on the right on the space $ \left(\C^p\right)^\calH $ of functions $ \calH\to\C^p $ as follows:
\begin{equation}\label{eq:006}
\left(F\big|_k\gamma\right)(\tau):=v(\gamma)^{-1}\,j(\gamma,\tau)^{-k}\,F(\gamma.\tau),\qquad F\in\left(\C^p\right)^\calH,\ \gamma\in\SL_2(\Z),\ \tau\in\calH. 
\end{equation}
We note that due to the nontriviality condition \eqref{eq:001}, we have
\begin{equation}\label{eq:005}
F\big|_k(-I_2)=F,\qquad F\in\left(\C^p\right)^\calH.
\end{equation}

Next, we recall that the group 
\[ K:=\SO_2(\R)=\left\{\kappa_\theta:=\begin{pmatrix}\cos \theta&-\sin \theta\\\sin \theta&\cos \theta\end{pmatrix}:\theta\in\R\right\} \]
is a maximal compact subgroup of $ \SL_2(\R) $ and the stabilizer of $ i $ under the action \eqref{eq:002}. Let us denote
\[ n_x:=\begin{pmatrix}1&x\\0&1\end{pmatrix},\qquad a_y:=\begin{pmatrix}y^{\frac12}&0\\0&y^{-\frac12}\end{pmatrix},\qquad h_t:=\begin{pmatrix}e^t&0\\0&e^{-t}\end{pmatrix} \]
for $ x\in\R $, $ y\in\R_{>0} $ and $ t\in\R_{\geq0} $. By the Iwasawa (resp., Cartan) decomposition of $ \SL_2(\R) $, every $ g\in\SL_2(\R) $ can be written in the form
\[ g=n_xa_y\kappa_\theta=\kappa_{\theta_1}h_t\kappa_{\theta_2} \]
for some $ x\in\R $, $ y\in\R_{>0} $, $ t\in\R_{\geq0} $ and $\theta,\theta_1,\theta_2\in\R $, and then we have $ g.i=x+iy $. Denoting by $ \mathrm v $ the standard $ \SL_2(\R) $-invariant Radon measure on $ \calH $ given by $ d\mathrm v(x+iy):=\frac{dx\,dy}{y^2} $, we have the following Haar measure on $ \SL_2(\R) $:
\begin{align}
\int_{\SL_2(\R)}\varphi(g)\,dg&=\frac1{2\pi}\int_0^{2\pi}\int_\calH \varphi(n_xa_y\kappa_\theta)\,d\mathrm v(x+iy)\,d\theta\label{eq:017}\\
&=\frac1\pi\int_0^{2\pi}\int_0^{\infty}\int_0^{2\pi}\varphi(\kappa_{\theta_1}h_t\kappa_{\theta_2})\,\sinh(2t)\,d\theta_1\,dt\,d\theta_2,\quad\varphi\in C_c\left(\SL_2(\R)\right).\label{eq:039}
\end{align}
Moreover, for every discrete subgroup $ \Gamma $ of $ \SL_2(\R) $, we have an $ \SL_2(\R) $-invariant Radon measure on $ \Gamma\backslash\SL_2(\R) $ defined by the condition
\[ \int_{\Gamma\backslash\SL_2(\R)}\sum_{\gamma\in\Gamma}\varphi(\gamma g)\,dg=\int_{\SL_2(\R)}\varphi(g)\,dg,\qquad \varphi\in C_c\left(\SL_2(\R)\right), \]
or equivalently by the condition  
\begin{equation}\label{eq:045}
\int_{\Gamma\backslash\SL_2(\R)}\varphi(g)\,dg=\frac1{2\pi\abs{\Gamma\cap\left<-I_2\right>}}\int_0^{2\pi}\int_{\Gamma\backslash\calH}\varphi(n_xa_y\kappa_\theta)\,d\mathrm v(x+iy)\,d\theta
\end{equation}
for all $ \varphi\in C_c(\Gamma\backslash\SL_2(\R)) $.

Finally, let us mention that throughout the paper, for every $ n\in\Z_{>0} $ we regard the elements of $ \C^n $ as column-vectors. Moreover, we equip $ \C^n $ with the standard inner product
\[ \scal xy_{\C^n}:=\sum_{j=1}^nx_j\overline{y_j},\qquad x=(x_1,\ldots,x_n),y=(y_1,\ldots,y_n)\in\C^n, \]
and denote the induced norm on $ \mathbb C^n $ by $ \norm\spacedcdot $. We use the same notation for the Frobenius norm on the space $ M_n(\C) $ of complex square matrices of order $ n $:
\[ \norm X:=\sqrt{\sum_{r=1}^n\sum_{s=1}^n\abs{x_{r,s}}^2},\qquad X=(x_{r,s})_{r,s=1}^n\in M_n(\C). \]

\section{Vector-valued modular forms}\label{sec:003}

Let $ \Hol(\calH) $ denote the space of holomorphic functions $ \calH\to\C $. We define its subspace $ \calF(k) $ to consist of the functions $ f\in\Hol(\calH) $ with the following property: for every  $ \sigma\in\SL_2(\Z) $, the function $ f\big|_k\sigma $ has a Fourier expansion of the form
\[ \left(f\big|_k\sigma\right)(\tau)=\sum_{n=h_\sigma}^\infty a_n(\sigma)\,e^{2\pi i\frac n{N_\sigma}\tau},\qquad\tau\in\calH, \]
where $ h_\sigma\in\Z $, $ a_n(\sigma)\in\C $, and $ N_\sigma\in\Z_{>0} $. We note that the space $ \calF(k) $ is an $ \SL_2(\Z) $-submodule of $ \Hol(\calH) $ with respect to the action $ \big|_k $. Its submodule of functions $ f\in\calF(k) $ such that $ h_\sigma\geq0 $ (resp., $ h_\sigma>0 $) for all $ \sigma\in\SL_2(\Z) $ will be denoted by $ \calM(k) $ (resp., $ \calS(k) $).

Let $ \Gamma $ be a subgroup of finite index in $ \SL_2(\Z) $. We say that $ F=(F_1,\ldots,F_p)\in\calF(k)^p $
is a vector-valued modular form (VVMF) of weight $ k $ for $ \Gamma $ (with multiplier system $ v $) with respect to a representation $ \rho:\Gamma\to\GL_p(\C) $ if
\begin{equation}\label{eq:009}
F\big|_k\gamma=\rho(\gamma)F,\qquad \gamma\in\Gamma.
\end{equation}
We will denote the space of all such $ F $ by $ \calF(k,\rho,\Gamma) $. We also define its subspaces 
\[ \calM(k,\rho,\Gamma):=\calF(k,\rho,\Gamma)\cap\calM(k)^p \]
of entire VVMFs and
\[ \calS(k,\rho,\Gamma):=\calF(k,\rho,\Gamma)\cap\calS(k)^p \]
of cuspidal VVMFs.

It will prove useful to note that \eqref{eq:009} is equivalent to the condition
\begin{equation}\label{eq:014}
F\big|_{k,\rho}\gamma=F,\qquad\gamma\in\Gamma,
\end{equation}
where $ \big|_{k,\rho} $ is the right action of $ \Gamma $ on $ \left(\C^p\right)^\calH $ defined by
\[ F\big|_{k,\rho}\gamma:=\rho(\gamma)^{-1}F\big|_k\gamma,\qquad F\in\left(\C^p\right)^\calH,\ \gamma\in\Gamma. \]

We note that in most standard texts on VVMFs (see, e.g., \cite[\S2]{km_poincare}), only the spaces
\[ \calF(k,\rho):=\calF(k,\rho,\SL_2(\Z)),\ \calM(k,\rho):=\calM(k,\rho,\SL_2(\Z))\ \text{and}\ \calS(k,\rho):=\calS(k,\rho,\SL_2(\Z)) \] 
are studied. Our slightly more general definition of spaces of VVMFs serves only to facilitate the construction of VVMFs in Section \ref{sec:004} and does not enlarge the class of studied VVMFs in a substantial way. The latter observation is elementary and well-known (see, e.g., \cite[\S2]{selberg}, \cite[\S1--2]{borcherds} and \cite[\S1]{gannon}). For convenience of the reader, we provide its details in the following lemma.

\begin{lemma}\label{lem:005}
	Let $ \Gamma $ be a subgroup of finite index in $ \SL_2(\Z) $, and let $ \rho:\Gamma\to\GL_p(\C) $ be a representation.
	Denoting $ d:=\abs{\SL_2(\Z)/\Gamma} $, let us fix $ \gamma_1,\ldots,\gamma_d\in\SL_2(\Z) $ such that $ \SL_2(\Z)=\bigsqcup_{j=1}^d\Gamma\gamma_j $. Then, the rule
	\[ F\mapsto\left(F\big|_k\gamma_j\right)_{j=1}^d \]
	defines embeddings
	\[ \begin{aligned}
	\calF(k,\rho,\Gamma)&\hookrightarrow\calF(k,\rho_0),\\
	\calM(k,\rho,\Gamma)&\hookrightarrow\calM(k,\rho_0),\\
	\calS(k,\rho,\Gamma)&\hookrightarrow\calS(k,\rho_0),
	\end{aligned} \]
	where $ \rho_0:\SL_2(\Z)\to\GL_{pd}(\C) $ is a representation equivalent to the induced representation $ \Ind_\Gamma^{\SL_2(\Z)}(\rho) $ and defined as follows: for every $ \gamma\in\SL_2(\Z) $, defining a permutation $ \ell\in \mathfrak S_d $ by the rule
	\[ \Gamma\gamma_j\gamma^{-1}=\Gamma\gamma_{\ell(j)},\qquad j\in\left\{1,\ldots,d\right\}, \]
	we put
	\begin{equation}\label{eq:004}
	\rho_0(\gamma):=\begin{pmatrix}
	\delta_{1,\ell(1)}I_p&\ldots&\delta_{1,\ell(d)}I_p\\
	\vdots&&\vdots\\
	\delta_{d,\ell(1)}I_p&\ldots&\delta_{d,\ell(d)}I_p
	\end{pmatrix}\begin{pmatrix}\rho(\gamma_{\ell(1)}\gamma\gamma_1^{-1})&&\\&\ddots&\\&&\rho(\gamma_{\ell(d)}\gamma\gamma_d^{-1})\end{pmatrix}, 
	\end{equation}
	where $ \delta $ is the Kronecker delta. If $ \rho $ is unitary, then so is $ \rho_0 $.
\end{lemma}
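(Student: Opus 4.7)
The plan is to reduce the lemma to a handful of routine verifications, the only nontrivial one being the transformation law. For \emph{well-definedness and injectivity}, note that $\calF(k)$, $\calM(k)$, and $\calS(k)$ are $\SL_2(\Z)$-stable subspaces of $\Hol(\calH)$ under $|_k$, so each $F|_k\gamma_j$ lies in $\calF(k)^p$ (resp.\ $\calM(k)^p$, $\calS(k)^p$) componentwise, whence $\tilde F := (F|_k\gamma_j)_{j=1}^d$ lies in the corresponding $(pd)$-fold product. Injectivity is immediate: for any single $j$, $F$ is recovered from $F|_k\gamma_j$ via $F = (F|_k\gamma_j)|_k\gamma_j^{-1}$, using that $|_k$ is a right action.

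For the \emph{transformation law}, which is the only real computation, fix $\gamma\in\SL_2(\Z)$ and $j\in\{1,\ldots,d\}$. Replacing $j$ by $\ell^{-1}(j)$ in the defining relation $\Gamma\gamma_j\gamma^{-1}=\Gamma\gamma_{\ell(j)}$ gives $\gamma_{\ell^{-1}(j)}\gamma^{-1}\gamma_j^{-1}\in\Gamma$, and inverting yields $\eta_j := \gamma_j\gamma\gamma_{\ell^{-1}(j)}^{-1} \in \Gamma$. Using \eqref{eq:014} (so that $F|_k\alpha=\rho(\alpha)F$ for every $\alpha\in\Gamma$) together with the fact that $|_k$ commutes with left multiplication by constant matrices,
\[
(F|_k\gamma_j)|_k\gamma = F|_k(\eta_j\gamma_{\ell^{-1}(j)}) = (\rho(\eta_j)F)|_k\gamma_{\ell^{-1}(j)} = \rho(\eta_j)\bigl(F|_k\gamma_{\ell^{-1}(j)}\bigr).
\]
On the other hand, inspection of \eqref{eq:004} shows that the $j$-th block-row of $\rho_0(\gamma)$ has a single nonzero block, located in column $\ell^{-1}(j)$ and equal to $\rho(\gamma_j\gamma\gamma_{\ell^{-1}(j)}^{-1})=\rho(\eta_j)$; thus the display above is precisely the $j$-th block of $\rho_0(\gamma)\tilde F$, proving $\tilde F\in\calF(k,\rho_0)$ (and analogously for $\calM$ and $\calS$).

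To identify $\rho_0$ with $\Ind_\Gamma^{\SL_2(\Z)}(\rho)$, and in particular to confirm that $\rho_0$ is a homomorphism, the plan is to realize the induction concretely as the space of functions $\varphi:\SL_2(\Z)\to\C^p$ satisfying $\varphi(\alpha g)=\rho(\alpha)\varphi(g)$ for all $\alpha\in\Gamma$, under the right-regular action $(\gamma\cdot\varphi)(g):=\varphi(g\gamma)$; expanding $\gamma\cdot\varphi$ in the basis of functions supported on single cosets $\Gamma\gamma_j$ recovers \eqref{eq:004} through the very same cocycle elements $\eta_j$. Finally, when $\rho$ is unitary the block-diagonal factor in \eqref{eq:004} is unitary and the block-permutation factor in front is unitary for every $\ell$, so $\rho_0(\gamma)$ is unitary. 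The only real obstacle in the proof is bookkeeping: one has to carefully keep track of $\ell$ versus $\ell^{-1}$ when matching the natural computation of $F|_k(\gamma_j\gamma)$ against the formula \eqref{eq:004}.
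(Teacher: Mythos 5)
Your proposal is correct and follows essentially the same route as the paper: the heart of both arguments is the identity $(F|_k\gamma_j)|_k\gamma=\rho(\gamma_j\gamma\gamma_{\ell^{-1}(j)}^{-1})\bigl(F|_k\gamma_{\ell^{-1}(j)}\bigr)$ matched against the single nonzero block in row $j$ of \eqref{eq:004}, and the identification $\rho_0\cong\Ind_\Gamma^{\SL_2(\Z)}(\rho)$ via the same cocycle elements (the paper uses the $\bigoplus_j\gamma_j^{-1}\C^p$ model of induction where you use the equivalent function-space model, which is only a cosmetic difference). The only nit is that the relation $F|_k\alpha=\rho(\alpha)F$ for $\alpha\in\Gamma$ is \eqref{eq:009} rather than \eqref{eq:014}.
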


\begin{proof}
	Let us prove the only two non-obvious parts of the claim:
	\begin{enumerate}[label=\textup{(\arabic*)},leftmargin=*,align=left]
		\item\label{enum:003:1} $ \left(F\big|_k\gamma_j\right)_{j=1}^d\big|_k\gamma=\rho_0(\gamma)\left(F\big|_k\gamma_j\right)_{j=1}^d $ for all $ \gamma\in\SL_2(\Z) $.
		\item\label{enum:003:2} $ \rho_0\cong\Ind_\Gamma^{\SL_2(\Z)}(\rho) $.\medskip
	\end{enumerate}
	
	\ref{enum:003:1} For $ \gamma\in\SL_2(\Z) $ and $ \ell $ as in the statement of the lemma, we have
	\[ \begin{aligned}
	\rho_0(\gamma)\left(F\big|_k\gamma_j\right)_{j=1}^d&\overset{\eqref{eq:004}}=\Big(\delta_{r,\ell(s)}I_p\Big)_{r,s=1}^d\,\diag\Big(\rho\left(\gamma_{\ell(j)}\gamma\gamma_j^{-1}\right)\Big)_{j=1}^d\,\left(F\big|_k\gamma_j\right)_{j=1}^d\\
	&\overset{\phantom{\eqref{eq:009}}}=\left(\rho \big(\gamma_j\gamma\gamma_{\ell^{-1}(j)}^{-1}\big)F\big|_k\gamma_{\ell^{-1}(j)}\right)_{j=1}^d\\
	&\overset{\eqref{eq:009}}=\left(F\big|_k\gamma_{j}\gamma\gamma_{\ell^{-1}(j)}^{-1}\big|_k\gamma_{\ell^{-1}(j)}\right)_{j=1}^d\\	
	&\overset{\phantom{\eqref{eq:009}}}=\left(F\big|_k\gamma_{j}\right)_{j=1}^d\big|_k\gamma.
	\end{aligned} \] 
	
	\ref{enum:003:2} Denoting $ \delta_j:=\gamma_j^{-1} $, we recall the following standard realization of $ \Ind_\Gamma^{\SL_2(\Z)}(\rho) $: for every $ j\in\left\{1,\ldots,d\right\} $, let $ \delta_j\C^p=\left\{\delta_ju:u\in\C^p\right\} $ be a complex vector space isomorphic to $ \C^p $ via $ \delta_ju\mapsto u $; then, $ \Ind_\Gamma^{\SL_2(\Z)}(\rho) $ can be defined as a representation of $ \SL_2(\Z) $ on $ \bigoplus_{j=1}^d\delta_j\C^p $ given by the formula
	\[ \begin{aligned}
	\left(\Ind_\Gamma^{\SL_2(\Z)}(\rho)\right)(\gamma)\left(\sum_{j=1}^d\delta_ju_j\right)&
	:=\sum_{j=1}^d\delta_{\ell(j)}\,\rho\left(\delta_{\ell(j)}^{-1}\gamma\delta_j\right)u_j\\
	&\,=\sum_{j=1}^d\delta_j\,\rho\left(\gamma_j\gamma\gamma_{\ell^{-1}(j)}^{-1}\right)u_{\ell^{-1}(j)}
	\end{aligned} \]
	for all $ \gamma\in\SL_2(\Z) $ and $ u_1,\ldots,u_d\in\C^p $. On the other hand, from \eqref{eq:004} we see that
	\[ \rho_0(\gamma)\left(u_j\right)_{j=1}^d=\left(\rho\left(\gamma_j\gamma\gamma_{\ell^{-1}(j)}^{-1}\right)u_{\ell^{-1}(j)}\right)_{j=1}^d,\qquad \left(u_j\right)_{j=1}^d\in\left(\C^p\right)^d. \]
	Thus, the rule
	\[ \left(u_j\right)_{j=1}^d\mapsto\sum_{j=1}^d\delta_ju_j \]
	defines an $ \SL_2(\Z) $-equivalence
	\[ \left(\rho_0,\C^{pd}\right)\cong\left(\Ind_\Gamma^{\SL_2(\Z)}(\rho),\bigoplus_{j=1}^d\delta_j\C^p\right).\qedhere \]
\end{proof}

The proof of the following lemma is identical to that of \cite[\S2, Proposition]{km_fourier}, so we omit it.

\begin{lemma}
	Let $ \Gamma $ be a subgroup of finite index in $ \SL_2(\Z) $. Let $ F=(F_1,\ldots,F_p)\in\calF(k)^p $ such that $ \C F_1+\ldots+\C F_p $ is a $ \Gamma $-submodule of $ \calF(k) $ with respect to the action $ \big|_k $. Then, there exists a representation $ \rho:\Gamma\to\GL_p(\C) $ such that $ F\in\calF(k,\rho,\Gamma) $.
\end{lemma}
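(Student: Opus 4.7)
The plan is to reduce to the linearly independent case and then extend by the identity on a chosen complement to restore the size $ p $. First, I would set $ V := \C F_1 + \ldots + \C F_p $, $ r := \dim_\C V $, and fix a $ \C $-basis $ G_1, \ldots, G_r $ of $ V $. Expressing each $ F_i $ in this basis produces a $ p \times r $ complex matrix $ A $ of full column rank such that $ F = AG $ componentwise, where $ G $ denotes the column of functions $ G_1, \ldots, G_r $.

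Since $ V $ is $ \Gamma $-stable under $ \big|_k $ and the $ G_j $'s are linearly independent, there is a unique map $ M : \Gamma \to \GL_r(\C) $ with $ G\big|_k \gamma = M(\gamma)\, G $ for all $ \gamma \in \Gamma $. The linear independence of the $ G_j $'s together with the fact that $ \big|_k $ is a right action forces $ M(\gamma_1\gamma_2) = M(\gamma_1) M(\gamma_2) $, so $ M $ is itself a representation of $ \Gamma $; this already covers the special case in which $ F_1, \ldots, F_p $ are linearly independent (where $ r = p $ and $ A $ can be taken to be $ I_p $).

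To obtain a representation of size $ p $ in general, I would complete $ A $ to an invertible matrix $ \tilde A \in \GL_p(\C) $ by appending any $ p - r $ columns making the result invertible, and set $ \tilde M(\gamma) := \diag\bigl(M(\gamma),\, I_{p-r}\bigr) \in \GL_p(\C) $. Then $ \rho(\gamma) := \tilde A\, \tilde M(\gamma)\, \tilde A^{-1} $ defines a representation $ \rho : \Gamma \to \GL_p(\C) $ as a conjugate of a homomorphism. A brief computation using the identity $ \tilde A^{-1} A = \binom{I_r}{0} $ (the first $ r $ columns of $ I_p $) then yields
\[ \rho(\gamma) F = \rho(\gamma)\, A\, G = A\, M(\gamma)\, G = A\,\bigl(G\big|_k \gamma\bigr) = (AG)\big|_k \gamma = F\big|_k \gamma, \]
which is exactly the transformation law \eqref{eq:009}.

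The only genuine subtlety is that when $ F_1, \ldots, F_p $ are linearly dependent the equation $ F\big|_k\gamma = \rho(\gamma) F $ does not determine $ \rho(\gamma) $ uniquely, so one cannot simply read off its entries column by column. The extension trick above is what picks out a coherent choice that remains compatible with the group law, and I expect this bookkeeping---rather than any serious analytic input---to be the main, though essentially elementary, point of the argument.
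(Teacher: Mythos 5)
Your argument is correct, and it is essentially the standard one: the paper itself omits the proof, referring to \cite[\S2, Proposition]{km_fourier}, whose argument likewise passes to a basis $G$ of the span, obtains a genuine representation $M$ there, and extends it to size $p$ via a left inverse of the coefficient matrix $A$ --- which is exactly your conjugation $\tilde A\,\diag(M(\gamma),I_{p-r})\,\tilde A^{-1}$ in different notation. The key points (full column rank of $A$, multiplicativity of $M$ from linear independence of the $G_j$, and invertibility of the resulting $\rho(\gamma)$) are all handled properly.
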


By \eqref{eq:005}, for every subgroup $ \Gamma\not\ni-I_2 $ of finite index in $ \SL_2(\Z) $ and representation $ \rho:\Gamma\to\GL_p(\C) $, we have
\[ \calF(k,\rho,\Gamma)=\calF\left(k,\rho',\left<-I_2\right>\Gamma\right) \]
and analogously for the subspaces of entire (resp., cuspidal) VVMFs, where $ \rho' $ is the unique extension of $ \rho $ to $ \left<-I_2\right>\Gamma $ satisfying $ \rho'(-I_2)=I_p $. This shows that we may restrict our study of VVMFs to the case when $ -I_2\in\Gamma $.

\begin{lemma}\label{lem:040}
	Let $ \Gamma\ni-I_2 $ be a subgroup of finite index in $ \SL_2(\Z) $, and let $ \rho:\Gamma\to\GL_p(\C) $ be a representation. Suppose that there exists $ F=(F_1,\ldots,F_p)\in\calF(k,\rho,\Gamma) $ such that the functions $ F_1,\ldots,F_p $ are linearly independent. Then:
	\begin{enumerate}[label=\textup{(N\arabic*)},leftmargin=*,align=left]
		\item\label{enum:008:1} $ \rho(-I_2)=I_p $.
		\item\label{enum:008:2} Let $ \sigma\in\SL_2(\Z) $ and $ M\in\Z_{>0} $ such that $ \sigma T^M\sigma^{-1}\in\Gamma $. Then, there exists $ N\in\Z_{>0} $ such that
		\[ \left(e^{2\pi i\kappa M}\rho\left(\sigma T^M\sigma^{-1}\right)\right)^N=I_p. \]
	\end{enumerate}
\end{lemma}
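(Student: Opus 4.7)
For part \ref{enum:008:1}, I would simply specialize the defining identity $F\big|_k\gamma=\rho(\gamma)F$ to $\gamma=-I_2\in\Gamma$ and combine it with \eqref{eq:005}. This yields $\rho(-I_2)F=F$, i.e., every row of $\rho(-I_2)-I_p$ gives a linear relation among $F_1,\ldots,F_p$. Linear independence of the components then forces $\rho(-I_2)=I_p$.

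For part \ref{enum:008:2}, the plan is to transfer the problem to the cusp $\sigma.\infty$ by studying $G:=F\big|_k\sigma\in\calF(k)^p$. Since $|_k$ is a right action, the identity $F\big|_k(\sigma T^M\sigma^{-1})=\rho(\sigma T^M\sigma^{-1})F$ rewritten via $|_k\sigma$ gives
\[ G\big|_k T^M=\rho(\sigma T^M\sigma^{-1})\,G. \]
Next I would unpack the left-hand side: since $j(T^M,\tau)=1$ and $v(T^M)=v(T)^M=e^{2\pi i\kappa M}$, this becomes the translation relation
\[ G(\tau+M)=A\,G(\tau),\qquad A:=e^{2\pi i\kappa M}\,\rho(\sigma T^M\sigma^{-1}). \]

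The strategy is now to diagonalize $A$ via Fourier analysis. Since each $G_j\in\calF(k)$, each has a Fourier expansion $G_j(\tau)=\sum_{n} b_{n,j}\,e^{2\pi i n\tau/N_0}$ for some common positive integer $N_0$ (take an lcm of the individual periods). Substituting into $G(\tau+M)=AG(\tau)$ and equating Fourier modes shows that for every $n\in\Z$, the vector $b_n:=(b_{n,j})_{j=1}^p\in\C^p$ is an eigenvector of $A$ with eigenvalue $e^{2\pi i nM/N_0}$, which is an $N_0$-th root of unity.

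The crucial step—and the main obstacle—is proving that the family $\{b_n\}_{n\in\Z}$ spans $\C^p$; then $A$ is diagonalizable with all eigenvalues $N_0$-th roots of unity, and $A^{N_0}=I_p$. This spanning property follows from the linear independence hypothesis: since $|_k\sigma$ is invertible, $G_1,\ldots,G_p$ are linearly independent, so any vector $v\in\C^p$ with $v^{*}b_n=0$ for all $n$ would yield a vanishing linear combination $\sum_j\overline{v_j}G_j=0$, forcing $v=0$. Hence the orthogonal complement of $\mathrm{span}\{b_n\}$ is trivial, and taking $N:=N_0$ completes the proof.
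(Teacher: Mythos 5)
Your proof is correct. Part \ref{enum:008:1} is exactly the paper's argument: specialize \eqref{eq:009} to $-I_2$, use \eqref{eq:005}, and invoke linear independence.

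For part \ref{enum:008:2} you arrive at the same pivotal relation as the paper, namely $G(\tau+M)=A\,G(\tau)$ for $G=F\big|_k\sigma$ and $A=e^{2\pi i\kappa M}\rho(\sigma T^M\sigma^{-1})$, but you finish differently. The paper's route is shorter: it takes $N$ to be a common Fourier period of the components of $F\big|_k\sigma$, iterates the translation relation $N$ times to get $(F\big|_k\sigma)(\tau)=(F\big|_k\sigma)(\tau+MN)=A^N(F\big|_k\sigma)(\tau)$, and concludes $A^N=I_p$ directly from the linear independence of $F_1\big|_k\sigma,\ldots,F_p\big|_k\sigma$ — no Fourier coefficients are ever examined. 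You instead expand $G$ in Fourier modes, read off that each coefficient vector $b_n$ satisfies $Ab_n=e^{2\pi i nM/N_0}b_n$, and prove that the $b_n$ span $\C^p$ via the duality argument with linear independence; then $A^{N_0}b_n=b_n$ for all $n$ forces $A^{N_0}=I_p$. Both arguments are sound and rest on the same two inputs (the translation relation and linear independence, transported through the invertible operator $\big|_k\sigma$). What your version buys is an explicit description of the eigenstructure of $A$ — it exhibits $A$ as diagonalizable with root-of-unity eigenvalues realized on the Fourier coefficient vectors, which is essentially the spectral decomposition \eqref{eq:026} that the paper later extracts from unitarity; the cost is the extra spanning step, which the paper's direct iteration avoids entirely.
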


\begin{proof}
	\ref{enum:008:1} We note that 
	\[ \rho(-I_2)F\overset{\eqref{eq:009}}=F\big|_k(-I_2)\overset{\eqref{eq:005}}=F, \]
	hence by the linear independence of $ F_1,\ldots,F_p $ it follows that $ \rho(-I_2)=I_p $.
	
	\ref{enum:008:2} Since $ F\in\calF(k)^p $, there exists $ N\in\Z_{>0} $ such that $ F\big|_k\sigma $ is $ N $-periodic. We have
	\[ \begin{aligned}
	\left(F\big|_k\sigma\right)(\tau)&\overset{\phantom{\eqref{eq:009}}}=\left(F\big|_k\sigma\right)(\tau+MN)\\
	&\overset{\phantom{\eqref{eq:009}}}=e^{2\pi i\kappa MN}\,\left(F\big|_k\sigma T^{MN}\right)(\tau)\\
	&\overset{\eqref{eq:009}}=\left(e^{2\pi i\kappa M}\rho\left(\sigma T^M\sigma^{-1}\right)\right)^N\,\left(F\big|_k\sigma\right)(\tau),\qquad\tau\in\calH,
	\end{aligned} \]
	so by the linear independence of $ F_1\big|_k\sigma,\ldots,F_p\big|_k\sigma $ it follows that
	\[ \left(e^{2\pi i\kappa M}\rho\left(\sigma T^M\sigma^{-1}\right)\right)^N=I_p.\qedhere \]
\end{proof}

From now until the end of this section, let $ \Gamma\ni-I_2 $ be a subgroup of finite index in $ \SL_2(\Z) $, and let $ \rho:\Gamma\to\GL_p(\C) $ be a representation. We say that $ \rho $ is a normal representation of $ \Gamma $ if it satisfies the conditions $ \ref{enum:008:1} $ and $ \ref{enum:008:2} $ of Lemma \ref{lem:040}.

Next, applying Lemma \ref{lem:005}, it follows from \cite[Lemma 2.4 and Theorem 2.5]{km_poincare} that the complex vector space $ \calM(k,\rho,\Gamma) $ is finite-dimensional for every $ k\in\R $ and is trivial if $ k\ll0 $. Moreover, by \cite[\S7]{km_poincare} we have the following lemma.

\begin{lemma}\label{lem:016}
	If $ \rho $ is unitary, then  $ \calM(k,\rho,\Gamma)=0 $ for $ k<0 $. 
\end{lemma}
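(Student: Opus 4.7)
My plan is to associate to every $ F\in\calM(k,\rho,\Gamma) $ the real-valued function
\[ \phi(\tau):=\norm{F(\tau)}^2\,\Im(\tau)^k,\qquad\tau\in\calH, \]
and to show that, when $ k<0 $, the only way $ \phi $ can be reconciled with the maximum principle for subharmonic functions is to have $ F\equiv 0 $.

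First I would verify the $ \Gamma $-invariance of $ \phi $. Unfolding \eqref{eq:009} and \eqref{eq:006} gives $ F(\gamma.\tau)=v(\gamma)\,j(\gamma,\tau)^k\,\rho(\gamma)\,F(\tau) $ for $ \gamma\in\Gamma $, so the hypothesis that both $ v $ and $ \rho $ are unitary yields $ \norm{F(\gamma.\tau)}^2=\abs{j(\gamma,\tau)}^{2k}\norm{F(\tau)}^2 $; combined with \eqref{eq:007} this gives $ \phi(\gamma.\tau)=\phi(\tau) $. The same unfolding performed with an arbitrary $ \sigma\in\SL_2(\Z) $ in place of $ \gamma $ shows $ \phi(\sigma.\tau)=\norm{(F|_k\sigma)(\tau)}^2\,\Im(\tau)^k $. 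Since $ F\in\calM(k)^p $, each component of $ F|_k\sigma $ has a Fourier expansion with non-negative indices, so $ \norm{(F|_k\sigma)(\tau)} $ is bounded as $ \Im(\tau)\to\infty $, while $ \Im(\tau)^k\to 0 $ because $ k<0 $. Thus $ \phi $ descends to a continuous function on the compact Baily--Borel compactification $ \overline{\Gamma\backslash\calH} $ taking the value $ 0 $ at every cusp. If $ F\not\equiv 0 $, then the maximum of $ \phi $ is strictly positive, so it is attained at some interior point $ \tau_0 $, at which $ F(\tau_0)\neq 0 $.

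To derive a contradiction I would compute the Laplacian $ \Delta=4\,\partial_{\bar z}\partial_z $ of $ \log\phi=\log\norm{F}^2+k\log\Im(\tau) $ on a neighbourhood of $ \tau_0 $. Using the holomorphicity of the components $ F_j $ and the Cauchy--Schwarz inequality in $ \C^p $, a direct calculation gives
\[ \Delta\log\norm{F(\tau)}^2=\frac{4\bigl(\norm{F'(\tau)}^2\norm{F(\tau)}^2-\abs{\scal{F'(\tau)}{F(\tau)}_{\C^p}}^2\bigr)}{\norm{F(\tau)}^4}\geq 0, \]
while $ \Delta\log\Im(\tau)^k=-k\,\Im(\tau)^{-2}>0 $. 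Adding the two shows that $ \log\phi $ is strictly subharmonic near $ \tau_0 $, contradicting the fact that $ \log\phi $ attains a local maximum there. Hence $ F\equiv 0 $.

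The main technical point to watch is the cusp analysis: one must confirm that the "entire" condition $ h_\sigma\geq 0 $, imposed component-wise in $ \calM(k) $, really does force $ \norm{(F|_k\sigma)(\tau)} $ to remain bounded as $ \Im(\tau)\to\infty $ for every $ \sigma\in\SL_2(\Z) $, so that $ \phi $ genuinely extends continuously by $ 0 $ to each cusp orbit of $ \Gamma $. If one prefers to avoid performing this analysis for a general $ \Gamma $, an alternative route is first to apply Lemma \ref{lem:005} to embed $ \calM(k,\rho,\Gamma)\hookrightarrow\calM(k,\rho_0) $ with $ \rho_0 $ still unitary, reducing the statement to the classical case $ \Gamma=\SL_2(\Z) $ treated in \cite[\S7]{km_poincare}.
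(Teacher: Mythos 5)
Your proof is correct, but it follows a genuinely different route from the paper, which does not argue directly at all: the paper reduces to the case $\Gamma=\SL_2(\Z)$ via the embedding $\calM(k,\rho,\Gamma)\hookrightarrow\calM(k,\rho_0)$ of Lemma~\ref{lem:005} (which preserves unitarity) and then simply cites \cite[\S 7]{km_poincare} --- exactly the fallback you offer in your closing paragraph. Your direct argument holds up: the $\Gamma$-invariance of $\phi$ and the identity $\phi(\sigma.\tau)=\norm{(F\big|_k\sigma)(\tau)}^2\,\Im(\tau)^k$ follow from \eqref{eq:006}, \eqref{eq:007} and the unitarity of $v$ and $\rho$ as you state; the technical point you flag is fine, since a power series in $q=e^{2\pi i\tau/N_\sigma}$ with only non-negative exponents that converges on $\abs q<1$ is bounded on $\abs q\le e^{-2\pi/N_\sigma}$ uniformly in $\Re(\tau)$, so $\phi$ does extend continuously by $0$ to the finitely many cusps of the compact quotient; and the strict subharmonicity of $\log\phi$ near a point where $F\neq0$ (Cauchy--Schwarz for $\Delta\log\norm F^2\ge0$ plus $\Delta\left(k\log\Im\tau\right)=-k\,\Im(\tau)^{-2}>0$) indeed forbids an interior maximum. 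One remark on economy: once you know $\phi$ is bounded on $\calH$, the subharmonicity step can be replaced by the Fourier-coefficient bound $\abs{a_n(\sigma)}\le C\,y^{-k/2}\,e^{2\pi ny/N_\sigma}$, valid for every $y>0$, which forces $a_n(\sigma)=0$ for all $n\ge0$ upon letting $y\to0^{+}$ since $-k/2>0$; this is essentially the argument of \cite[\S 7]{km_poincare}. What your version buys is a self-contained proof that avoids the passage to $\SL_2(\Z)$ and isolates the role of the maximum principle.
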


The proof of the following lemma is analogous to \cite[proof of Theorem 2.1.5]{miyake}, and we leave it as an exercise to the reader.

\begin{lemma}\label{lem:041}
	Suppose that $ \rho $ is unitary. Let $ F\in\calS(k,\rho,\Gamma) $. Then, the function $ \calH\to\C $,
	\[ \tau\mapsto\norm{F(\tau)}\Im(\tau)^{\frac k2}, \]
	is $ \Gamma $-invariant and bounded on $ \calH $.
\end{lemma}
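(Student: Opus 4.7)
The plan is to separate the invariance and boundedness claims. For $ \Gamma $-invariance, I rewrite \eqref{eq:009} via \eqref{eq:006} as
\[ F(\gamma.\tau)=v(\gamma)\,j(\gamma,\tau)^k\,\rho(\gamma)F(\tau),\qquad \gamma\in\Gamma,\ \tau\in\calH, \]
and take norms on $ \C^p $. Since $ v $ takes values in $ \C_{\abs z=1} $ and $ \rho $ is unitary, both unit-modular factors disappear, so $ \norm{F(\gamma.\tau)}=\abs{j(\gamma,\tau)}^k\,\norm{F(\tau)} $. Combined with \eqref{eq:007}, which gives $ \Im(\gamma.\tau)^{k/2}=\Im(\tau)^{k/2}/\abs{j(\gamma,\tau)}^k $, this immediately yields the $ \Gamma $-invariance of $ \phi(\tau):=\norm{F(\tau)}\,\Im(\tau)^{k/2} $.

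For boundedness, I would run the same norm computation for an arbitrary $ \sigma\in\SL_2(\Z) $ in place of $ \gamma $; here only $ \abs{v(\sigma)}=1 $ is used (no $ \rho $ appears), and it gives
\[ \phi(\sigma.\tau)=\norm{\left(F\big|_k\sigma\right)(\tau)}\,\Im(\tau)^{k/2},\qquad\tau\in\calH. \]
Since $ F\in\calS(k,\rho,\Gamma)\subseteq\calS(k)^p $, every component of $ F\big|_k\sigma $ has a Fourier expansion of the form $ \sum_{n\geq h}a_n\,e^{2\pi in\tau/N} $ with $ h>0 $; choosing $ N $ to be a common denominator across the $ p $ components shows that $ \norm{\left(F\big|_k\sigma\right)(\tau)} $ decays exponentially as $ \Im(\tau)\to\infty $, so $ \phi(\sigma.\tau)\to 0 $ in that limit, for every $ \sigma\in\SL_2(\Z) $.

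To conclude, I would choose $ \SL_2(\Z) $-representatives $ \sigma_1,\ldots,\sigma_m $ of the finitely many $ \Gamma $-inequivalent cusps and assemble a fundamental domain $ \calD $ for $ \Gamma $ whose closure in $ \calH\cup\Q\cup\left\{\infty\right\} $ consists of a compact core together with truncated cuspidal strips of the form $ \sigma_i\left\{\tau\in\calH:\Im(\tau)\geq Y\right\} $ for $ Y $ large. On the compact core, $ \phi $ is bounded by continuity; on each cuspidal strip, the exponential decay just established shows $ \phi(\sigma_i.\tau)\to 0 $ as $ \Im(\tau)\to\infty $. Hence $ \phi $ is bounded on $ \calD $ and, by $ \Gamma $-invariance, on all of $ \calH $.

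The only mildly delicate ingredient is the assembly of the fundamental domain together with the well-known finiteness of cusps for a finite-index subgroup of $ \SL_2(\Z) $; this is entirely standard, so the lemma is essentially a vector-valued transcription of \cite[proof of Theorem 2.1.5]{miyake}, with the unitarity of $ \rho $ playing the role that $ p=1 $ does in the scalar case.
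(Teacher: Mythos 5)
Your proposal is correct and follows exactly the argument the paper intends: the paper omits the proof, stating only that it is analogous to \cite[proof of Theorem 2.1.5]{miyake}, and your write-up is precisely that vector-valued transcription (invariance from unitarity of $v$ and $\rho$ together with \eqref{eq:007}, then boundedness via exponential decay of $F\big|_k\sigma$ at each of the finitely many cusps of a fundamental domain for $\Gamma$). No gaps.
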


\begin{lemma}
	Suppose that $ \rho $ is unitary. Then, $ \calS(k,\rho,\Gamma) $ is a finite-dimensional Hilbert space under the Petersson inner product
	\begin{equation}\label{eq:029}
	\scal FG_{\calS(k,\rho,\Gamma)}:=\int_{\Gamma\backslash\calH}\scal{F(\tau)}{G(\tau)}_{\C^p}\,\Im(\tau)^k\,d\mathrm v(\tau),\qquad F,G\in\calS(k,\rho,\Gamma),
	\end{equation}
	and the embedding $ \calS(k,\rho,\Gamma)\hookrightarrow\calS(k,\rho_0) $ of Lemma \ref{lem:005} is an isometry.
\end{lemma}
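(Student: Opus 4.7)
The plan is to verify, in order, the following: well-definedness of the Petersson integral, the inner-product axioms, finite-dimensionality (which yields completeness automatically), and the isometry statement.

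For well-definedness, I would first check that the integrand $\scal{F(\tau)}{G(\tau)}_{\C^p}\,\Im(\tau)^k$ descends to $\Gamma\backslash\calH$. For $\gamma\in\Gamma$ and $\tau\in\calH$, the definition \eqref{eq:006} combined with \eqref{eq:009} gives $F(\gamma.\tau)=v(\gamma)\,j(\gamma,\tau)^k\,\rho(\gamma)\,F(\tau)$ and analogously for $G$. Taking the $\C^p$-pairing produces a factor $\abs{v(\gamma)}^2\,\abs{j(\gamma,\tau)}^{2k}=\abs{j(\gamma,\tau)}^{2k}$, while the unitarity of $\rho$ cancels $\rho(\gamma)$; multiplication by $\Im(\gamma.\tau)^k$ then absorbs the remaining $\abs{j(\gamma,\tau)}^{2k}$ via \eqref{eq:007}. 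By Lemma \ref{lem:041}, $\norm{F(\tau)}^2\,\Im(\tau)^k$ and $\norm{G(\tau)}^2\,\Im(\tau)^k$ are bounded on all of $\calH$, so Cauchy--Schwarz bounds the integrand. Since $[\SL_2(\Z):\Gamma]<\infty$, the quotient $\Gamma\backslash\calH$ has finite $\mathrm v$-volume, and the integral converges absolutely.

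Sesquilinearity and conjugate symmetry are inherited directly from $\scal{\cdot}{\cdot}_{\C^p}$. For positive-definiteness, $\norm{F(\tau)}^2\,\Im(\tau)^k$ is a nonnegative continuous function on $\calH$, so if its integral over $\Gamma\backslash\calH$ vanishes then $F\equiv 0$. Next, $\calS(k,\rho,\Gamma)\subset\calM(k,\rho,\Gamma)$, and the latter is finite-dimensional (as noted in the paragraph preceding Lemma \ref{lem:016}, via Lemma \ref{lem:005} combined with the Knopp--Mason finiteness theorem). A finite-dimensional complex inner-product space is automatically complete, so $\calS(k,\rho,\Gamma)$ is a Hilbert space.

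For the isometry claim, I would fix a fundamental domain $\calD$ for $\SL_2(\Z)$ acting on $\calH$. The coset decomposition $\SL_2(\Z)=\bigsqcup_{j=1}^d\Gamma\gamma_j$ implies that $\SL_2(\Z).\tau_0=\bigsqcup_{j=1}^d\Gamma.(\gamma_j.\tau_0)$ for each $\tau_0\in\calD$, so $\bigsqcup_{j=1}^d\gamma_j.\calD$ is a fundamental domain for $\Gamma$ up to a $\mathrm v$-null boundary set. Applying \eqref{eq:006} and \eqref{eq:007} directly (no unitarity is needed here, since $\gamma_j$ need not lie in $\Gamma$) yields the pointwise identity
\[ \norm{F(\gamma_j.\tau)}^2\,\Im(\gamma_j.\tau)^k=\norm{\left(F\big|_k\gamma_j\right)(\tau)}^2\,\Im(\tau)^k, \]
and summing over $j$ together with the $\SL_2(\R)$-invariance of $d\mathrm v$ gives
\[ \scal{F}{F}_{\calS(k,\rho,\Gamma)}=\int_{\SL_2(\Z)\backslash\calH}\sum_{j=1}^d\norm{\left(F\big|_k\gamma_j\right)(\tau)}^2\,\Im(\tau)^k\,d\mathrm v(\tau)=\scal{\left(F\big|_k\gamma_j\right)_{j=1}^d}{\left(F\big|_k\gamma_j\right)_{j=1}^d}_{\calS(k,\rho_0)}. \]
Polarization extends this to all inner products. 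I do not anticipate any serious obstacle; the only delicate piece of bookkeeping is identifying $\bigsqcup_j\gamma_j.\calD$ as a $\Gamma$-fundamental domain, which is routine from the coset decomposition.
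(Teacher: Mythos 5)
Your proposal is correct and follows essentially the same route as the paper: convergence via Lemma \ref{lem:041} and the finite volume of $\Gamma\backslash\calH$, finite-dimensionality from the remarks preceding Lemma \ref{lem:016}, and the isometry by decomposing $\Gamma\backslash\calH$ into the $\gamma_j$-translates of an $\SL_2(\Z)$-fundamental domain and applying \eqref{eq:006} and \eqref{eq:007}. One small correction to your parenthetical: while the unitarity of $\rho$ indeed plays no role in the isometry step (as $\rho(\gamma_j)$ never appears), the unitarity of the multiplier system $v$ \emph{is} used there, since the pointwise identity requires $\abs{v(\gamma_j)}=1$ to cancel the factor $\abs{v(\gamma_j)}^{-2}$ coming from \eqref{eq:006}.
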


\begin{proof}
	The space $ \calS(k,\rho,\Gamma) $ is finite-dimensional by our comments before Lemma \ref{lem:016}. One shows that the integrand in \eqref{eq:029} is $ \Gamma $-invariant as in \cite[proof of Lemma 5.1]{km_poincare}. Moreover, we have
	\[ \begin{aligned}
	\int_{\Gamma\backslash\calH}&\abs{\scal{F(\tau)}{G(\tau)}_{\C^p}}\,\Im(\tau)^k\,d\mathrm v(\tau)\\
	&\leq \mathrm v(\Gamma\backslash\calH)\,\left(\sup_{\tau\in\calH}\norm{F(\tau)}\Im(\tau)^{\frac k2}\right)\left(\sup_{\tau\in\calH}\norm{G(\tau)}\Im(\tau)^{\frac k2}\right)\overset{\text{Lem.\,\ref{lem:041}}}<\infty,
	\end{aligned} \]
	for all $ F,G\in\calS(k,\rho,\Gamma) $,	so the inner product \eqref{eq:029} is well-defined.
	
	Using the notation of Lemma \ref{lem:005}, the second claim of the lemma follows from the equality
	\[ \begin{aligned}
	&\scal{\left(F\big|_k\gamma_j\right)_{j=1}^d}{\left(G\big|_k\gamma_j\right)_{j=1}^d}_{\calS(k,\rho_0)}\\
	&\qquad=\int_{\SL_2(\Z)\backslash\calH}\sum_{j=1}^d\scal{\left(F\big|_k\gamma_j\right)(\tau)}{\left(G\big|_k\gamma_j\right)(\tau)}_{\C^p}\,\Im(\tau)^k\,d\mathrm v(\tau)\\
	&\qquad=\int_{\SL_2(\Z)\backslash\calH}\sum_{j=1}^d\scal{F(\gamma_j.\tau)}{G(\gamma_j.\tau)}_{\C^p}\,\Im(\gamma_j.\tau)^k\,d\mathrm v(\tau)\\
	&\qquad=\int_{\Gamma\backslash\calH}\scal{F(\tau)}{G(\tau)}_{\C^p}\,\Im(\tau)^k\,d\mathrm v(\tau)\\
	&\qquad=\scal FG_{\calS(k,\rho,\Gamma)},\qquad F,G\in\calS(k,\rho,\Gamma),	
	\end{aligned} \]
	where the second equality follows from \eqref{eq:006} and \eqref{eq:007} using the unitarity of $ v $.
\end{proof}

\section{Construction of vector-valued Poincar\'e series}\label{sec:004}

Let $ \Gamma $ be a subgroup of finite index in $ \SL_2(\Z) $, let $ \rho:\Gamma\to\GL_p(\C) $ be a representation, and let $ \Lambda $ be a subgroup of $ \Gamma $. The defining property \eqref{eq:014} of VVMFs suggests that, as in the classical theory (see, e.g., \cite[\S2.6]{miyake}), interesting elements of $ \calF(k,\rho,\Gamma) $ may be constructed in the form of a vector-valued Poincar\'e series (VVPS)
\[ P_{\Lambda\backslash\Gamma,\rho}f:=\sum_{\gamma\in\Lambda\backslash\Gamma}f\big|_{k,\rho}\gamma, \]
where $ f:\calH\to\C^p $ is a suitable function invariant under the $ \big|_{k,\rho} $-action of $ \Lambda $.
The following proposition, based on this idea, is a vector-valued version of \cite[Lemmas 3 and 5]{zunarRama} (see also \cite[first part of Lemma 2.3]{muicLFunk}).

\begin{proposition}\label{prop:010}
	Let $ \Gamma\ni-I_2 $ be a subgroup of finite index in $ \SL_2(\Z) $, and let $ \rho:\Gamma\to\GL_p(\C) $ be a unitary representation.
	Let $ \Lambda\ni-I_2 $ be a subgroup of $ \Gamma $.
	Let $ f:\calH\to\C^p $ be a measurable function with the following two properties:
	\begin{enumerate}[label=\textup{(f\arabic*)},leftmargin=*,align=left]
		\item\label{enum:011:1} $ f\big|_{k,\rho}\lambda=f $ for all $ \lambda\in\Lambda $.
		\item\label{enum:011:2} $ \int_{\Lambda\backslash\calH}\norm{f(\tau)}\,\Im(\tau)^{\frac k2}\,d\mathrm v(\tau)<\infty $.
	\end{enumerate}
	Then, we have the following:
	\begin{enumerate}[label=\textup{(\arabic*)},leftmargin=*,align=left]
		\item\label{prop:010:1} The Poincar\'e series 
		$ P_{\Lambda\backslash\Gamma,\rho}f $
		converges absolutely a.e.\ on $ \calH $, satisfies
		\begin{equation}\label{eq:013}
		\left(P_{\Lambda\backslash\Gamma,\rho}f\right)\big|_{k,\rho}\gamma=P_{\Lambda\backslash\Gamma,\rho}f,\qquad\gamma\in\Gamma, 
		\end{equation}
		and we have
		\begin{equation}\label{eq:015}
		\int_{\Gamma\backslash\calH}\norm{\left(P_{\Lambda\backslash\Gamma,\rho}f\right)(\tau)}\,\Im(\tau)^{\frac k2}\,d\mathrm v(\tau)\leq\int_{\Lambda\backslash\calH}\norm{f(\tau)}\,\Im(\tau)^{\frac k2}\,d\mathrm v(\tau).
		\end{equation}
		\item\label{prop:010:2} Suppose additionally that $ \rho $ is normal and that $ f\in\Hol(\calH)^p $. Then, the series $ P_{\Lambda\backslash\Gamma,\rho}f $ converges absolutely and uniformly on compact sets in $ \calH $ and defines an element of
		\[ \begin{cases}
		\calS(k,\rho,\Gamma),&\text{if }k\geq2\\
		\calM(k,\rho,\Gamma),&\text{if }0\leq k<2\\
		0,&\text{if }k<0.
		\end{cases} \]
	\end{enumerate}
\end{proposition}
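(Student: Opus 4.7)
Part (1) is proved by scalar unfolding. Because $\rho$ and $v$ are unitary, for any measurable $F\colon\calH\to\C^p$ and $\gamma\in\SL_2(\Z)$ one has
\[
\|(F\big|_{k,\rho}\gamma)(\tau)\|\,\Im(\tau)^{k/2}=\|F(\gamma.\tau)\|\,\Im(\gamma.\tau)^{k/2}
\]
using \eqref{eq:007}. By \ref{enum:011:1}, the nonnegative scalar function $g\colon\tau\mapsto\|f(\tau)\|\,\Im(\tau)^{k/2}$ is therefore $\Lambda$-invariant, and the standard unfolding identity gives
\[
\int_{\Gamma\backslash\calH}\sum_{\gamma\in\Lambda\backslash\Gamma}\|(f\big|_{k,\rho}\gamma)(\tau)\|\,\Im(\tau)^{k/2}\,d\mathrm v(\tau)=\int_{\Lambda\backslash\calH}g(\tau)\,d\mathrm v(\tau)<\infty
\]
by \ref{enum:011:2}. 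This yields absolute a.e.\ convergence of $P:=P_{\Lambda\backslash\Gamma,\rho}f$ and, via the triangle inequality, the bound \eqref{eq:015}; the $\Gamma$-equivariance \eqref{eq:013} is immediate from the reindexing $\gamma\mapsto\gamma\gamma_0$ in $\Lambda\backslash\Gamma$.

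For Part (2), the plan is first to upgrade this convergence to locally uniform on $\calH$. Each term $(f\big|_{k,\rho}\gamma)$ is holomorphic, so each of its components is subharmonic in modulus; the planar mean value inequality on the Euclidean disk $D(\tau_0,\Im(\tau_0)/2)$ — a hyperbolic ball of hyperbolic radius independent of $\tau_0$ — combined with the conversion $dx\,dy=\Im(w)^2\,d\mathrm v(w)$, yields, for $k\geq 0$,
\[
\|(f\big|_{k,\rho}\gamma)(\tau_0)\|\,\Im(\tau_0)^{k/2}\leq C\int_{D(\tau_0,\Im(\tau_0)/2)}\|(f\big|_{k,\rho}\gamma)(w)\|\,\Im(w)^{k/2}\,d\mathrm v(w),
\]
with $C$ independent of $\gamma$ and uniform in $\tau_0$ on compact subsets of $\calH$. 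Summing over $\gamma\in\Lambda\backslash\Gamma$ and invoking Part (1) gives locally uniform absolute convergence of $P$, so $P$ is holomorphic and \eqref{eq:013} holds pointwise.

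To place $P$ in $\calF(k,\rho,\Gamma)$, fix $\sigma\in\SL_2(\Z)$ and pick $M\in\Z_{>0}$ with $\sigma T^M\sigma^{-1}\in\Gamma$. Running the computation from the proof of Lemma \ref{lem:040}\ref{enum:008:2} with $P$ in place of $F$ and invoking normality \ref{enum:008:2}, one finds $N\in\Z_{>0}$ for which $P\big|_k\sigma$ is $MN$-periodic, so each component admits a Fourier expansion $\sum_{n\in\Z}a_n(\sigma)\,e^{2\pi in\tau/(MN)}$. Applying the subharmonic estimate above to $P$ itself, together with the $\Gamma$-invariance and $L^1$-finiteness on $\Gamma\backslash\calH$ of $H_P:=\|P\|\,\Im^{k/2}$ from Part (1) and the geometric fact that $D(\tau_0,\Im(\tau_0)/2)$ has bounded hyperbolic area yet meets $O(\Im(\tau_0))$ fundamental translates near any cusp, one obtains $H_P(\tau)=O(\Im(\tau))$ near every cusp of $\Gamma$. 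Hence $\|(P\big|_k\sigma)(\tau)\|$ grows at most polynomially in $\Im(\tau)$, and the standard Fourier coefficient bound forces $a_n(\sigma)=0$ for $n<0$; thus $P\in\calM(k,\rho,\Gamma)$. For $k<0$, Lemma \ref{lem:016} then gives $P=0$. For $k\geq 2$, term-by-term integration gives $\int_0^{MN}(P\big|_k\sigma)(x+iy)\,dx=MN\,a_0(\sigma)$, so $MN\|a_0(\sigma)\|\leq\int_0^{MN}\|(P\big|_k\sigma)(x+iy)\|\,dx$; integrating against $y^{k/2-2}\,dy$ on $[Y,\infty)$ yields $MN\|a_0(\sigma)\|\int_Y^\infty y^{k/2-2}\,dy\leq\int_{R_\sigma}H_P\,d\mathrm v<\infty$, where $R_\sigma$ is a cuspidal region for $\sigma.\infty$ in a $\Gamma$-fundamental domain, and since $\int_Y^\infty y^{k/2-2}\,dy=\infty$ for $k\geq 2$, this forces $a_0(\sigma)=0$; hence $P\in\calS(k,\rho,\Gamma)$. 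The main obstacle throughout Part (2) is bootstrapping the global $L^1$ bound of Part (1) into pointwise polynomial growth of $H_P$ at every cusp; this rests on subharmonicity of each $|P_j|$ together with the mismatch — between the bounded hyperbolic area and the Euclidean diameter $\sim\Im(\tau_0)$ — of $D(\tau_0,\Im(\tau_0)/2)$ near cusps.
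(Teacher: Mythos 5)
Your proof is correct, and its skeleton matches the paper's: unfolding for Part (1), local uniform convergence via the sub-mean-value property of $\norm{\,\cdot\,}$ for holomorphic vector-valued functions (this is exactly the content of Miyake's Corollary 2.6.4, which the paper simply cites), $MN$-periodicity of $P\big|_k\sigma$ from normality \ref{enum:008:2}, and the constant-term vanishing for $k\geq2$ from the divergence of $\int^\infty y^{k/2-2}\,dy$ against the finite cuspidal integral supplied by \eqref{eq:015}. Where you genuinely diverge is in killing the \emph{negative} Fourier coefficients: the paper does this with the same one-line mechanism as the constant term --- insert the coefficient formula \eqref{eq:044} into $\int_M^\infty\abs{b_n}\,e^{-2\pi(n+m_j)y/M}y^{k/2-2}\,dy$, bound it by the $L^1$ norm of $\norm{F}\Im^{k/2}$ over a cuspidal region that injects into $\Gamma\backslash\calH$, and note that the left side diverges for $n+m_j<0$ unless $b_n=0$ --- whereas you first bootstrap the $L^1$ bound into a pointwise bound $\norm{P(\tau)}\Im(\tau)^{k/2}=O(\Im(\tau))$ at the cusps via subharmonicity plus a covering-multiplicity count, and then use polynomial growth to suppress negative coefficients. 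Your route is valid but strictly more work; note also that since you already run the coefficient-formula estimate to handle $a_0(\sigma)$, the same computation with $n<0$ would have given you the negative coefficients for free, exactly as in the paper (the paper also diagonalizes $\rho(\sigma T^M\sigma^{-1})$ by a unitary $U$ to get exponents $(n+m_j)/M$, which it needs later for the classical Poincar\'e series but which is not essential here). Two small imprecisions to repair: (i) you state the sub-mean-value estimate only "for $k\geq0$", but you need $P\in\calM(k,\rho,\Gamma)$ for $k<0$ before invoking Lemma \ref{lem:016}; the estimate in fact holds for every real $k$ because $\Im(w)$ and $\Im(\tau_0)$ are comparable within fixed constants on $D(\tau_0,\Im(\tau_0)/2)$, so no restriction is needed; (ii) the count of "$O(\Im(\tau_0))$ fundamental translates" is literally correct only for the cusp at $\infty$ and should be read after conjugating an arbitrary cusp $\sigma.\infty$ to $\infty$, i.e., the multiplicity is $O(\Im(\sigma^{-1}.\tau_0))$.
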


\begin{proof}
	\ref{prop:010:1} One checks easily that the integral in \ref{enum:011:2} is well-defined, i.e., the integrand is $ \Lambda $-invariant, using \ref{enum:011:1}, \eqref{eq:007} and the unitarity of $ \rho $ and $ v $. The terms of the series $ P_{\Lambda\backslash\Gamma,\rho}f $ are also well-defined by \ref{enum:011:1}. All claims in \ref{prop:010:1} now easily follow from the estimate
	\begin{equation}\label{eq:012}
	\begin{aligned}
	\int_{\Gamma\backslash\calH}\sum_{\gamma\in\Lambda\backslash\Gamma}&\norm{\left(f\big|_{k,\rho}\gamma\right)(\tau)}\,\Im(\tau)^{\frac k2}\,d\mathrm v(\tau)\\
	&=\int_{\Gamma\backslash\calH}\sum_{\gamma\in\Lambda\backslash\Gamma}\norm{v(\gamma)^{-1}\rho(\gamma)^{-1}f(\gamma.\tau)}\,\abs{j(\gamma,\tau)}^{-k}\,\Im(\tau)^{\frac k2}\,d\mathrm v(\tau)\\
	&=\int_{\Gamma\backslash\calH}\sum_{\gamma\in\Lambda\backslash\Gamma}\norm{f(\gamma.\tau)}\,\Im(\gamma.\tau)^{\frac k2}\,d\mathrm v(\tau)\\
	&=\int_{\Lambda\backslash\calH}\norm{f(\tau)}\,\Im(\tau)^{\frac k2}\,d\mathrm v(\tau)\overset{\ref{enum:011:2}}<\infty,
	\end{aligned}
	\end{equation}
	where the second equality holds by \eqref{eq:007} and the unitarity of $ \rho $ and $ v $.
	
	\ref{prop:010:2} It follows easily from the estimate \eqref{eq:012} and \cite[Corollary 2.6.4]{miyake} that the series $ P_{\Lambda\backslash\Gamma,\rho}f $ converges absolutely and uniformly on compact sets in $ \calH $ and defines a function $ F\in\Hol(\calH)^p $. By \eqref{eq:013}, $ F $ satisfies \eqref{eq:014}.
	
	Next, let $ \sigma\in\SL_2(\Z) $. Denoting $ x:=\sigma.\infty $, by \cite[Theorem 1.5.4(2)]{miyake} there exists $ M\in\Z_{>0} $ such that $ \Gamma_x=\left<\pm\sigma T^M\sigma^{-1}\right> $. In particular, $ \sigma T^M\sigma^{-1}\in\Gamma $, hence by the unitarity of $ \rho $ and \ref{enum:008:2} there exist a unitary matrix $ U\in\U(p) $ and $ m_1,\ldots,m_p\in\left]0,1\right]\cap\Q $ such that
	\[ e^{2\pi i \kappa M}\,\rho\left(\sigma T^M\sigma^{-1}\right)=U^{-1}\diag(e^{2\pi im_1},\ldots,e^{2\pi im_p})U. \]
	By \eqref{eq:013}, 
	\[ F\big|_k\sigma T^M\sigma^{-1}=\rho\left(\sigma T^M\sigma^{-1}\right)F, \]
	hence 
	\[ UF\big|_k\sigma\big|_kT^M=e^{-2\pi i\kappa M}\,\diag(e^{2\pi i m_j})_{j=1}^p UF\big|_k\sigma, \]
	so for every $ j\in\left\{1,\ldots,p\right\} $ we have
	\[ \left((UF)_j\big|_k\sigma\right)(\tau+M)=e^{2\pi im_j}\,\left((UF)_j\big|_k\sigma\right)(\tau),\qquad\tau\in\calH, \]
	which implies that the (holomorphic) function $ \calH\to\C $,
	\[ \tau\mapsto e^{-2\pi i\frac{m_j}M\tau}\left((UF)_j\big|_k\sigma\right)(\tau), \]
	is $ M $-periodic, hence the function $ (UF)_j\big|_k\sigma $ has a Fourier expansion of the form
	\begin{equation}\label{eq:033}
	\left((UF)_j\big|_k\sigma\right)(\tau)=\sum_{n\in\Z}b_n(j)\,e^{2\pi i\frac{n+m_j}M\tau},\qquad\tau\in\calH,
	\end{equation}
	where $ b_n(j)\in\C $ are given by
	\begin{equation}\label{eq:044}
	b_n(j)=\frac1M\int_0^M\left((UF)_j\big|_k\sigma\right)(x+iy)\,e^{-2\pi i\frac{n+m_j}M(x+iy)}\,dx,\qquad y\in\R_{>0}.
	\end{equation}
	We have
	\begin{equation}\label{eq:043}
	\begin{aligned}
	\int_M^\infty&\abs{b_n(j)}\,e^{-2\pi\frac{n+m_j}My}\,y^{\frac k2-2}\,dy\\
	&\overset{\eqref{eq:044}}\leq\frac1M\int_{\left]0,M\right]\times\left]M,\infty\right[}\abs{\left((UF)_j\big|_k\sigma\right)(\tau)}\,\Im(\tau)^{\frac k2}\,d\mathrm v(\tau)\\
	&\underset{\eqref{eq:007}}{\overset{\eqref{eq:006}}=}\frac1M\int_{\left]0,M\right]\times\left]M,\infty\right[}\abs{(UF)_j(\sigma.\tau)}\,\Im(\sigma.\tau)^{\frac k2}\,d\mathrm v(\tau)\\
	&\overset{\phantom{\eqref{eq:007}}}=\frac1M\int_{\sigma.\left(\left]0,M\right]\times\left]M,\infty\right[\right)}\abs{(UF)_j(\tau)}\,\Im(\tau)^{\frac k2}\,d\mathrm v(\tau)\\
	&\overset{\phantom{\eqref{eq:007}}}\leq\frac1M\int_{\Gamma\backslash\calH}\norm{(UF)(\tau)}\,\Im(\tau)^{\frac k2}\,d\mathrm v(\tau)\\
	&\overset{\phantom{\eqref{eq:007}}}=\frac1M\int_{\Gamma\backslash\calH}\norm{F(\tau)}\,\Im(\tau)^{\frac k2}\,d\mathrm v(\tau)\\
	&\overset{\eqref{eq:015}}<\infty,
	\end{aligned} 
	\end{equation}
	where the second inequality holds because by \cite[Corollary 1.7.5]{miyake} no two different points of $ \sigma.\left(\left]0,M\right]\times\left]M,\infty\right[\right) $ are mutually $ \Gamma $-equivalent.
	The estimate \eqref{eq:043} implies that $ b_n(j)=0 $ if either $ n+m_j=0 $ and $ k\geq2 $ or $ n+m_j<0 $. This means that the functions $ (UF)_j $ satisfy
	\[ (UF)_j\in\begin{cases}
	\calS(k),&\text{if }k\geq2\\
	\calM(k),&\text{if }k<2,
	\end{cases}\qquad j\in\left\{1,\ldots,p\right\}, \]
	so the same holds for their linear combinations $ F_j $. It follows that
	\[ F\in\begin{cases}
	\calS(k,\rho,\Gamma),&\text{if }k\geq2\\
	\calM(k,\rho,\Gamma),&\text{if }k<2.
	\end{cases} \]
	Finally, the claim in the case when $ k<0 $ follows by Lemma \ref{lem:016}.
\end{proof}

\section{A non-vanishing criterion for vector-valued Poincar\'e series}\label{sec:005}

Let $ \Gamma\ni-I_2 $ be a subgroup of finite index in $ \SL_2(\Z) $, and let $ \rho:\Gamma\to\GL_p(\C) $ be a unitary representation.
Moreover, let $ \Lambda\ni-I_2 $ be a subgroup of $ \Gamma $.

We start this section with a technical lemma.

\begin{lemma}\label{lem:025}
	Let $ f:\calH\to\C^p $ be a measurable function satisfying \ref{enum:011:1}, and let $ A $ be a Borel-measurable subset of $ \calH $. Then,
	\[ \int_{\Lambda\backslash\Lambda.A}\norm{f(\tau)}\,\Im(\tau)^{\frac k2}\,d\mathrm v(\tau)=2\int_{\Lambda\backslash\underline{\Lambda.A}}\norm{f(g.i)}\,\abs{j(g,i)}^{-k}\,dg, \]
	where we use the notation
	\begin{equation}\label{eq:023}
	\underline S:=\left\{n_xa_y:x+iy\in S\right\}K=\left\{g\in\SL_2(\R):g.i\in S\right\},\qquad S\subseteq\calH.
	\end{equation}
\end{lemma}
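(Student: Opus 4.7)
My plan is to verify that the integrand on the right-hand side is well-defined on $\Lambda\backslash\underline{\Lambda.A}$ (i.e., $\Lambda$-left-invariant) and $K$-right-invariant, and then apply formula \eqref{eq:045} with $\varphi(g):=\mathbf 1_{\underline{\Lambda.A}}(g)\,\|f(g.i)\|\,|j(g,i)|^{-k}$ (extending \eqref{eq:045} from $C_c$ to nonnegative measurable functions by standard monotone convergence, which is harmless).

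For $K$-right-invariance, I would use the cocycle property of $j$ and the Iwasawa decomposition. Writing $g=n_xa_y\kappa_\theta$, one has $j(\kappa_\theta,i)=\cos\theta+i\sin\theta$, so $|j(\kappa_\theta,i)|=1$; combined with $|j(n_xa_y,i)|=y^{-1/2}$ and the cocycle relation, this yields $|j(g,i)|^{-k}=y^{k/2}=\Im(g.i)^{k/2}$. In particular the whole integrand depends on $g$ only through $g.i$, hence is $K$-right-invariant, and $\mathbf 1_{\underline{\Lambda.A}}(g)=\mathbf 1_{\Lambda.A}(g.i)$ by definition of the underline notation \eqref{eq:023}.

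For $\Lambda$-left-invariance, condition \ref{enum:011:1} unfolds (using the unitarity of $\rho$ and $v$ to take norms) to $\|f(\lambda.\tau)\|=|j(\lambda,\tau)|^k\,\|f(\tau)\|$ for all $\lambda\in\Lambda$, $\tau\in\calH$. Combined with the cocycle identity $j(\lambda g,i)=j(\lambda,g.i)\,j(g,i)$, this gives
\[ \|f(\lambda g.i)\|\,|j(\lambda g,i)|^{-k}=\|f(g.i)\|\,|j(g,i)|^{-k}, \]
as required. The set $\underline{\Lambda.A}$ is clearly $\Lambda$-stable on the left.

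With both invariances established, I would apply \eqref{eq:045} to $\varphi$. Since $-I_2\in\Lambda$ we have $|\Lambda\cap\langle-I_2\rangle|=2$, so the prefactor is $\frac{1}{4\pi}$. The integrand is independent of $\theta$ (by $K$-right-invariance), so the $\theta$-integral contributes a factor $2\pi$, giving
\[ \int_{\Lambda\backslash\underline{\Lambda.A}}\|f(g.i)\|\,|j(g,i)|^{-k}\,dg=\tfrac{1}{2}\int_{\Lambda\backslash\Lambda.A}\|f(\tau)\|\,\Im(\tau)^{k/2}\,d\mathrm v(\tau), \]
which is the desired identity. There is no real obstacle here; the only point one must be careful about is the factor of $2$, which comes precisely from $-I_2\in\Lambda$ in \eqref{eq:045}.
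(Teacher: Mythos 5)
Your proposal is correct and follows essentially the same route as the paper: both reduce the claim to the integration formula \eqref{eq:045} together with the identity $\abs{j(n_xa_y\kappa_\theta,i)}^{-k}=y^{k/2}=\Im(g.i)^{k/2}$ and the observation $\mathbbm1_{\underline{\Lambda.A}}(g)=\mathbbm1_{\Lambda.A}(g.i)$, with the factor $2$ coming from $\abs{\Lambda\cap\left<-I_2\right>}=2$. The only difference is cosmetic (you go from the group side to $\calH$ rather than the reverse, and you spell out the $\Lambda$-left-invariance check that the paper leaves implicit).
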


\begin{proof}
	Denoting by $ \mathbbm 1_{\Lambda.A} $ (resp., $ \mathbbm1_{\underline{\Lambda.A}} $) the characteristic function of $ \Lambda.A $ (resp., $ \underline{\Lambda.A} $) in $ \calH $ (resp., $ \SL_2(\R) $), we have
	\begin{align*}
	&\int_{\Lambda\backslash\Lambda.A}\norm{f(\tau)}\,\Im(\tau)^{\frac k2}\,d\mathrm v(\tau)\\
	&\overset{\phantom{\eqref{eq:045}}}=\frac1{2\pi}\int_0^{2\pi}\int_{\Lambda\backslash\calH}\norm{f(x+iy)}\,y^{\frac k2}\,\mathbbm1_{\Lambda.A}(x+iy)\,d\mathrm v(x+iy)\,d\theta\\
	&\overset{\phantom{\eqref{eq:045}}}=\frac1{2\pi}\int_0^{2\pi}\int_{\Lambda\backslash\calH}\norm{f(n_xa_y\kappa_\theta.i)}\,\abs{j(n_xa_y\kappa_\theta,i)}^{-k}\,\mathbbm1_{\underline{\Lambda.A}}(n_xa_y\kappa_\theta)\,d\mathrm v(x+iy)\,d\theta\\
	&\overset{\eqref{eq:045}}=2\int_{\Lambda\backslash\underline{\Lambda.A}}\norm{f(g.i)}\,\abs{j(g,i)}^{-k}\,dg.\qedhere
	\end{align*}
\end{proof}

The following theorem may be regarded as a vector-valued version of the integral non-vanishing criterion \cite[Lemma 3.1]{muicIJNT} for Poincar\'e series of integral weight on $ \calH $ (see also \cite[Theorem 2]{zunarRama} for the half-integral weight version, and \cite[Theorem 4.1]{muicMathAnn} for the original version of the criterion, in which Poincar\'e series on unimodular locally compact Hausdorff groups are considered).

\begin{theorem}\label{thm:019}
	Let $ \Gamma\ni-I_2 $ be a subgroup of finite index in $ \SL_2(\Z) $, and let $ \rho:\Gamma\to\GL_p(\C) $ be a unitary representation.
	Let $ \Lambda\ni-I_2 $ be a subgroup of $ \Gamma $, and let $ f:\calH\to\C^p $ be a measurable function with the following properties:
	\begin{enumerate}[label=\textup{(f\arabic*)},leftmargin=*,align=left]
		\item\label{enum:018:1} $ f\big|_{k,\rho}\lambda=f $ for all $ \lambda\in\Lambda $.
		\item[\textup{(f2')}] The series $ P_{\Lambda\backslash\Gamma,\rho}f $ converges absolutely a.e.\ on $ \calH $.\label{enum:018:2}
	\end{enumerate}
	Then, we have that
	\[ \int_{\Gamma\backslash\calH}\norm{\left(P_{\Lambda\backslash\Gamma,\rho}f\right)(\tau)}\,\Im(\tau)^{\frac k2}\,d\mathrm v(\tau)>0 \] 
	if one of the following holds:
	\begin{enumerate}[label=\textup{(\roman*)},leftmargin=*,align=left]
		\item\label{thm:019:1} There exists a Borel-measurable set $ A\subseteq\calH $ with the following properties:
		\begin{enumerate}[label=\textup{(A\arabic*)},leftmargin=*,align=left]
			\item\label{thm:019:1:1} No two points of $ A $ are mutually $ \Gamma $-equivalent.
			\item\label{thm:019:1:2} Denoting $ (\Lambda.A)^c:=\calH\setminus\Lambda.A $, we have
			\[ \int_{\Lambda\backslash\Lambda.A}\norm{f(\tau)}\,\Im(\tau)^{\frac k2}\,d\mathrm v(\tau)>\int_{\Lambda\backslash(\Lambda.A)^c}\norm{f(\tau)}\,\Im(\tau)^{\frac k2}\,d\mathrm v(\tau). \]
		\end{enumerate}
		\item\label{thm:019:2} There exists a Borel-measurable set $ C\subseteq\SL_2(\R) $ with the following properties:
		\begin{enumerate}[label=\textup{(C\arabic*)},leftmargin=*,align=left]
			\item\label{thm:019:2:1} $ CK=C $.
			\item\label{thm:019:2:2} $ CC^{-1}\cap\Gamma\subseteq\left<-I_2\right> $.
			\item\label{thm:019:2:3} Denoting $ (\Lambda C)^c:=\SL_2(\R)\setminus\Lambda C $, we have
			\[ \int_{\Lambda\backslash\Lambda C}\norm{f(g.i)}\,\abs{j(g,i)}^{-k}\,dg>\int_{\Lambda\backslash(\Lambda C)^c}\norm{f(g.i)}\,\abs{j(g,i)}^{-k}\,dg. \]
		\end{enumerate}
	\end{enumerate}
\end{theorem}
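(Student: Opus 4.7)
The plan is to prove \ref{thm:019:1} directly using the reverse triangle inequality together with the unfolding computation of \eqref{eq:012}, and then to deduce \ref{thm:019:2} from \ref{thm:019:1} via Lemma \ref{lem:025}. The reduction goes first. Given $C$ as in \ref{thm:019:2}, I set $A:=\{g.i:g\in C\}\subseteq\calH$. Because $CK=C$, one has $\underline A=C$ by \eqref{eq:023}, and hence $\underline{\Lambda.A}=\Lambda C$ and $\underline{(\Lambda.A)^c}=(\Lambda C)^c$. Applying Lemma \ref{lem:025} to $A$ and to $(\Lambda.A)^c$ then converts the two integrals in \ref{thm:019:2:3} into those of \ref{thm:019:1:2}, up to a common factor of $2$. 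For \ref{thm:019:1:1}: if $\gamma.a_1=a_2$ with $a_j=g_j.i\in A$ and $\gamma\in\Gamma$, then $\gamma g_1\in g_2K\subseteq CK=C$, so $\gamma\in CC^{-1}\cap\Gamma\subseteq\langle-I_2\rangle$ by \ref{thm:019:2:2}; since $-I_2$ fixes every point of $\calH$ under \eqref{eq:002}, this forces $a_1=a_2$.

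For \ref{thm:019:1}, the key pointwise estimate is the reverse triangle inequality
\[
\bigl\|\bigl(P_{\Lambda\backslash\Gamma,\rho}f\bigr)(\tau)\bigr\|\,\geq\,2\|f(\tau)\|-\sum_{\gamma\in\Lambda\backslash\Gamma}\bigl\|\bigl(f\big|_{k,\rho}\gamma\bigr)(\tau)\bigr\|,
\]
obtained by isolating the identity-coset summand, valid at almost every $\tau\in\calH$ by the absolute-convergence hypothesis on $P_{\Lambda\backslash\Gamma,\rho}f$. By the unitarity of $\rho$ and $v$ the function $\|(P_{\Lambda\backslash\Gamma,\rho}f)(\tau)\|\,\Im(\tau)^{k/2}$ is $\Gamma$-invariant (argument as in \eqref{eq:012}), and by \ref{thm:019:1:1} the set $A$ embeds (modulo measure zero) into $\Gamma\backslash\calH$. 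Restricting the target integral to $A$ and inserting the pointwise estimate gives the lower bound $2\int_A\|f\|\,\Im^{k/2}d\mathrm v$ minus a tail term that I still need to control.

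For the tail, I would unfold exactly as in \eqref{eq:012}. For any transversal $\{\gamma_C\}_{C\in\Lambda\backslash\Gamma}$, the translates $\gamma_C.A$ are pairwise disjoint up to the measure-zero set of $\Gamma$-elliptic points, because \ref{thm:019:1:1} together with $-I_2\in\Lambda$ forces any coincidence $\gamma_{C_1}.a_1=\gamma_{C_2}.a_2$ either to have $\gamma_{C_1}^{-1}\gamma_{C_2}\in\{\pm I_2\}\subseteq\Lambda$ (so $C_1=C_2$) or to occur at a point with non-trivial $\Gamma$-stabilizer. Changing variables $\tau\mapsto\gamma_C.\tau$ in each summand and using the $\Lambda$-invariance of $\|f(\tau)\|\,\Im(\tau)^{k/2}$ yields
\[
\sum_{\gamma\in\Lambda\backslash\Gamma}\int_A\bigl\|\bigl(f\big|_{k,\rho}\gamma\bigr)(\tau)\bigr\|\,\Im(\tau)^{k/2}\,d\mathrm v(\tau)=\int_{\Lambda\backslash\Gamma.A}\|f\|\,\Im^{k/2}\,d\mathrm v\leq\int_{\Lambda\backslash\Lambda.A}\|f\|\,\Im^{k/2}\,d\mathrm v+\int_{\Lambda\backslash(\Lambda.A)^c}\|f\|\,\Im^{k/2}\,d\mathrm v,
\]
using $\Gamma.A\subseteq\Lambda.A\cup(\Lambda.A)^c$. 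The same reasoning (with $\Lambda\subseteq\Gamma$ in place of $\Gamma$) gives $\int_A\|f\|\,\Im^{k/2}d\mathrm v=\int_{\Lambda\backslash\Lambda.A}\|f\|\,\Im^{k/2}d\mathrm v$, so combining the bounds produces
\[
\int_{\Gamma\backslash\calH}\bigl\|P_{\Lambda\backslash\Gamma,\rho}f\bigr\|\,\Im^{k/2}\,d\mathrm v\,\geq\,\int_{\Lambda\backslash\Lambda.A}\|f\|\,\Im^{k/2}\,d\mathrm v-\int_{\Lambda\backslash(\Lambda.A)^c}\|f\|\,\Im^{k/2}\,d\mathrm v,
\]
which is strictly positive by \ref{thm:019:1:2}.

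The main obstacle I anticipate is the bookkeeping around $-I_2\in\Lambda$ and the elliptic points of $\Gamma$: these are what make ``no two points of $A$ are mutually $\Gamma$-equivalent'' exactly the right hypothesis, and they require all disjointness and bijectivity claims for the $\gamma_C.A$ to be interpreted modulo measure zero. Beyond that, the argument essentially repeats the unfolding of \eqref{eq:012} with $A$ in place of a fundamental domain for $\Gamma\backslash\calH$, while using the reverse triangle inequality to single out the identity-coset summand from $P_{\Lambda\backslash\Gamma,\rho}f$.
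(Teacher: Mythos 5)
Your proof is correct and follows essentially the same strategy as the paper's: part \ref{thm:019:2} is reduced to part \ref{thm:019:1} via Lemma \ref{lem:025} with $A:=C.i$ exactly as in the paper, and part \ref{thm:019:1} is proved by an unfolding computation combined with a reverse triangle inequality, with \ref{thm:019:1:1} supplying (modulo the measure-zero set of elliptic points, where $-I_2\in\Lambda$ absorbs the sign ambiguity) the disjointness needed to unfold. The only organizational difference is that the paper decomposes the \emph{function}, writing $f=\mathbbm1_{\Lambda.A}f+\mathbbm1_{(\Lambda.A)^c}f$ and integrating over all of $\Gamma\backslash\calH$ --- using that at a.e.\ point at most one term of $P_{\Lambda\backslash\Gamma,\rho}(\mathbbm1_{\Lambda.A}f)$ is nonzero, so the norm of that sum equals the sum of the norms --- whereas you decompose the \emph{domain}, restricting the integral to $A$ and isolating the identity coset; both routes land on the same lower bound $\int_{\Lambda\backslash\Lambda.A}\|f(\tau)\|\Im(\tau)^{k/2}\,d\mathrm v(\tau)-\int_{\Lambda\backslash(\Lambda.A)^c}\|f(\tau)\|\Im(\tau)^{k/2}\,d\mathrm v(\tau)$. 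One small caveat: under (f2') alone the quantity $X:=\int_{\Lambda\backslash\Lambda.A}\|f(\tau)\|\Im(\tau)^{k/2}\,d\mathrm v(\tau)$ may be $+\infty$ (hypothesis \ref{thm:019:1:2} only forces the complementary integral $Y$ to be finite), and then your splitting of the lower bound as ``$2X$ minus the full tail $\int_{\Lambda\backslash\Gamma.A}$'' is formally $\infty-\infty$. This is repaired by grouping the pointwise estimate as $\|f(\tau)\|-\sum_{\Lambda\gamma\neq\Lambda}\|(f\big|_{k,\rho}\gamma)(\tau)\|$ and observing that the non-identity cosets contribute at most $Y<\infty$, since for $\gamma\notin\Lambda$ the translate $\gamma.A$ lies in $(\Lambda.A)^c$ up to measure zero; the paper's arrangement sidesteps this because its subtracted term is $\int_{\Gamma\backslash\calH}\|P_{\Lambda\backslash\Gamma,\rho}(\mathbbm1_{(\Lambda.A)^c}f)(\tau)\|\Im(\tau)^{k/2}\,d\mathrm v(\tau)\le Y$, which is automatically finite.
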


\begin{remark}
	By Proposition \ref{prop:010}\ref{prop:010:1}, Theorem \ref{thm:019} remains true if we replace the property \textup{(f2')} in it by \ref{enum:011:2}.
\end{remark}

\begin{proof}[Proof of Theorem \ref{thm:019}]
	Suppose that \ref{thm:019:1} holds. First, we recall that by \cite[Theorem 1.7.8]{miyake}, the set of elliptic points for $ \Gamma $ in $ \calH $ is countable, hence of measure zero. Next, we note that if $ \tau\in\calH $ is not an elliptic point for $ \Gamma $, i.e., if $ \Gamma_\tau=\left<-I_2\right> $, then
	\begin{equation}\label{eq:020}
	\#\left\{\gamma\in\Lambda\backslash\Gamma:\mathbbm1_{\Lambda.A}(\gamma.\tau)\neq0\right\}\leq1.
	\end{equation}
	Namely, if $ \gamma.\tau,\gamma'.\tau\in\Lambda.A $ for some $ \gamma,\gamma'\in\Gamma $, then there exist $ \lambda,\lambda'\in\Lambda $ such that $ \lambda\gamma.\tau,\lambda'\gamma'.\tau\in A $, hence by \ref{thm:019:1:1} we have $ \lambda\gamma.\tau=\lambda'\gamma'.\tau $, which by the non-ellipticity of $ \tau $ implies that $ \lambda'\gamma'\in\left\{\pm\lambda\gamma\right\} $, hence $ \Lambda\gamma'=\Lambda\gamma $.
	
	Denoting by $ \mathbbm1_S $ the characteristic function of a set $ S\subseteq\calH $, we have
	\begin{equation}\label{eq:021}
	\begin{aligned}
	\int_{\Gamma\backslash\calH}&\norm{\left(P_{\Lambda\backslash\Gamma,\rho}\left(\mathbbm1_{\Lambda.A}\,f\right)\right)(\tau)}\,\Im(\tau)^{\frac k2}\,d\mathrm v(\tau)\\
	&\overset{\phantom{\eqref{eq:020}}}=\int_{\Gamma\backslash\calH}\norm{\sum_{\gamma\in\Lambda\backslash\Gamma}\mathbbm1_{\Lambda.A}(\gamma.\tau)\,\rho(\gamma)^{-1}\left(f\big|_k\gamma\right)(\tau)}\,\Im(\tau)^{\frac k2}\,d\mathrm v(\tau)\\
	&\overset{\eqref{eq:020}}=\int_{\Gamma\backslash\calH}\sum_{\gamma\in\Lambda\backslash\Gamma}\mathbbm1_{\Lambda.A}(\gamma.\tau)\,\norm{\rho(\gamma)^{-1}\left(f\big|_k\gamma\right)(\tau)}\,\Im(\tau)^{\frac k2}\,d\mathrm v(\tau)\\
	&\overset{\phantom{\eqref{eq:020}}}=\int_{\Gamma\backslash\calH}\sum_{\gamma\in\Lambda\backslash\Gamma}\mathbbm1_{\Lambda.A}(\gamma.\tau)\,\norm{f(\gamma.\tau)}\,\Im(\gamma.\tau)^{\frac k2}\,d\mathrm v(\tau)\\
	&\overset{\phantom{\eqref{eq:020}}}=\int_{\Lambda\backslash\calH}\mathbbm1_{\Lambda.A}(\tau)\,\norm{f(\tau)}\,\Im(\tau)^{\frac k2}\,d\mathrm v(\tau)\\
	&\overset{\phantom{\eqref{eq:020}}}=\int_{\Lambda\backslash\Lambda.A}\norm{f(\tau)}\,\Im(\tau)^{\frac k2}\,d\mathrm v(\tau),
	\end{aligned}
	\end{equation}
	where the third equality holds by \eqref{eq:007}, \eqref{eq:006} and the unitarity of $ \rho $ and $ v $.
	
	On the other hand, we have
	\begin{equation}\label{eq:022}
	\begin{aligned}
	\int_{\Gamma\backslash\calH}&\norm{\left(P_{\Lambda\backslash\Gamma,\rho}\left(\mathbbm1_{(\Lambda.A)^c}\,f\right)\right)(\tau)}\,\Im(\tau)^{\frac k2}\,d\mathrm v(\tau)\\
	&\leq\int_{\Gamma\backslash\calH}\sum_{\gamma\in\Lambda\backslash\Gamma}\mathbbm1_{(\Lambda.A)^c}(\gamma.\tau)\,\norm{\rho(\gamma)^{-1}\left(f\big|_k\gamma\right)(\tau)}\,\Im(\tau)^{\frac k2}\,d\mathrm v(\tau)\\
	&=\int_{\Gamma\backslash\calH}\sum_{\gamma\in\Lambda\backslash\Gamma}\mathbbm1_{(\Lambda.A)^c}(\gamma.\tau)\,\norm{f(\gamma.\tau)}\,\Im(\gamma.\tau)^{\frac k2}\,d\mathrm v(\tau)\\
	&=\int_{\Lambda\backslash\left(\Lambda.A\right)^c}\norm{f(\tau)}\,\Im(\tau)^{\frac k2}\,d\mathrm v(\tau).
	\end{aligned}
	\end{equation}
	
	Thus, 
	\[ \begin{aligned}
	&\int_{\Gamma\backslash\calH}\norm{\left(P_{\Lambda\backslash\Gamma,\rho}f\right)(\tau)}\Im(\tau)^{\frac k2}\,d\mathrm v(\tau)\\
	&\underset{\phantom{\eqref{eq:022}}}\geq\int_{\Gamma\backslash\calH}\norm{\left(P_{\Lambda\backslash\Gamma,\rho}\left(\mathbbm1_{\Lambda.A}\,f\right)\right)(\tau)}\,\Im(\tau)^{\frac k2}\,d\mathrm v(\tau)\\
	&\phantom{\underset{\eqref{eq:022}}\geq}-\int_{\Gamma\backslash\calH}\norm{\left(P_{\Lambda\backslash\Gamma,\rho}\left(\mathbbm1_{(\Lambda.A)^c}\,f\right)\right)(\tau)}\,\Im(\tau)^{\frac k2}\,d\mathrm v(\tau)\\
	&\underset{\eqref{eq:022}}{\overset{\eqref{eq:021}}\geq}\int_{\Lambda\backslash\Lambda.A}\norm{f(\tau)}\,\Im(\tau)^{\frac k2}\,d\mathrm v(\tau)
	-\int_{\Lambda\backslash(\Lambda.A)^c}\norm{f(\tau)}\,\Im(\tau)^{\frac k2}\,d\mathrm v(\tau)\\
	&\underset{\phantom{\eqref{eq:022}}}{\overset{\ref{thm:019:1:2}}>}0.
	\end{aligned}\medskip \]
	
	Next, suppose that \ref{thm:019:2} holds. To finish the proof of the theorem, it suffices to prove that the set
	\begin{equation}\label{eq:024}
	A:=C.i\overset{\ref{thm:019:2:1}}=\left\{x+iy:n_xa_yK\subseteq C\right\} 
	\end{equation}
	has the properties \ref{thm:019:1:1} and \ref{thm:019:1:2}.
	
	\ref{thm:019:1:1} Suppose that $ \gamma.(x+iy)=x'+iy' $ for some $ \gamma\in\Gamma $ and $ x+iy,x'+iy'\in A $. Equivalently, $ \gamma n_xa_y.i=n_{x'}a_{y'}.i $, i.e., $ a_{y'}^{-1}n_{x'}^{-1}\gamma n_xa_y.i=i $, hence $ a_{y'}^{-1}n_{x'}^{-1}\gamma n_xa_y\in K $, so 
	\[ \gamma\in\left(n_{x'}a_{y'}K\right)\left(n_xa_y\right)^{-1}\cap\Gamma\overset{\eqref{eq:024}}\subseteq CC^{-1}\cap\Gamma\overset{\ref{thm:019:2:2}}\subseteq\left<-I_2\right>, \]
	which implies that $ x+iy=x'+iy' $.
	
	\ref{thm:019:1:2} Using the notation \eqref{eq:023}, by \ref{thm:019:2:1} and \eqref{eq:024} we have that $ C=\underline A $, $ \Lambda C=\underline{\Lambda.A} $, and $ (\Lambda C)^c=\underline{(\Lambda.A)^c} $, so \ref{thm:019:1:2} follows from \ref{thm:019:2:3} by applying Lemma \ref{lem:025}.
\end{proof}

\section{Classical vector-valued Poincar\'e series}\label{sec:006}

As a first example application of our results, in this section we construct and study the non-vanishing of the cuspidal VVMFs that are vector-valued analogues of the classical Poincar\'e series (for details on the latter cusp forms, see, e.g., \cite[Theorems 2.6.9(1) and 2.6.10]{miyake}). We note that in the case when $ \Gamma=\SL_2(\Z) $, these VVMFs have already been studied in \cite[\S3]{km_poincare}.

We will need the following lemma.

\begin{lemma}\label{lem:037}
	Let $ \Gamma\ni-I_2 $ be a subgroup of finite index in $ \SL_2(\Z) $, let $ \rho:\Gamma\to\GL_p(\C) $ be a unitary representation, and let $ \Lambda\ni-I_2 $ be a subgroup of $ \Gamma $. Let $ f:\calH\to\C^p $ be a measurable function satisfying \ref{enum:011:1} and \ref{enum:011:2}, such that $ P_{\Lambda\backslash\Gamma,\rho}f\in\calS(k,\rho,\Gamma) $. Then,
	\begin{equation}\label{eq:031}
	\scal F{P_{\Lambda\backslash\Gamma,\rho}f}_{\calS(k,\rho,\Gamma)}=\int_{\Lambda\backslash\calH}\scal{F(\tau)}{f(\tau)}_{\C^p}\,\Im(\tau)^k\,d\mathrm v(\tau),\qquad F\in\calS(k,\rho,\Gamma).
	\end{equation}
\end{lemma}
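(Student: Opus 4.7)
The strategy is the classical \emph{unfolding} argument adapted to the vector-valued setting. For $F\in\calS(k,\rho,\Gamma)$, I would start by substituting the series defining $P_{\Lambda\backslash\Gamma,\rho}f$ into the Petersson inner product \eqref{eq:029} and attempt to pull the sum outside the integral, so that
\[ \scal F{P_{\Lambda\backslash\Gamma,\rho}f}_{\calS(k,\rho,\Gamma)}=\int_{\Gamma\backslash\calH}\sum_{\gamma\in\Lambda\backslash\Gamma}\scal{F(\tau)}{\left(f\big|_{k,\rho}\gamma\right)(\tau)}_{\C^p}\,\Im(\tau)^k\,d\mathrm v(\tau). \]
This interchange, and the absolute convergence backing it up, is the only step that requires real care.

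To justify it, I would invoke Lemma \ref{lem:041}: since $F$ is cuspidal, there is a constant $M\geq 0$ such that $\norm{F(\tau)}\Im(\tau)^{k/2}\leq M$ on $\calH$. Applying the Cauchy--Schwarz inequality on $\C^p$ to each summand, the absolute value of the double sum-integral is majorized by
\[ M\int_{\Gamma\backslash\calH}\sum_{\gamma\in\Lambda\backslash\Gamma}\norm{\left(f\big|_{k,\rho}\gamma\right)(\tau)}\,\Im(\tau)^{k/2}\,d\mathrm v(\tau), \]
and the computation already carried out in \eqref{eq:012} (in the proof of Proposition \ref{prop:010}\ref{prop:010:1}) rewrites this as $M\int_{\Lambda\backslash\calH}\norm{f(\tau)}\,\Im(\tau)^{k/2}\,d\mathrm v(\tau)$, which is finite by \ref{enum:011:2}. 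Fubini--Tonelli therefore applies.

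Once the interchange is in hand, each summand is handled by a direct transformation argument. I would rewrite $F$ as $F\big|_{k,\rho}\gamma$ using \eqref{eq:014} and expand both slots of the inner product via \eqref{eq:006}. The factors $v(\gamma)^{-1}$ and $\rho(\gamma)^{-1}$ appearing in both slots cancel by the unitarity of $v$ and $\rho$, while $\abs{j(\gamma,\tau)}^{-2k}\,\Im(\tau)^k$ collapses to $\Im(\gamma.\tau)^k$ by \eqref{eq:007}. Each summand then reads $\scal{F(\gamma.\tau)}{f(\gamma.\tau)}_{\C^p}\,\Im(\gamma.\tau)^k$, and the standard unfolding
\[ \int_{\Gamma\backslash\calH}\sum_{\gamma\in\Lambda\backslash\Gamma}G(\gamma.\tau)\,d\mathrm v(\tau)=\int_{\Lambda\backslash\calH}G(\tau)\,d\mathrm v(\tau), \]
applied to $G(\tau):=\scal{F(\tau)}{f(\tau)}_{\C^p}\Im(\tau)^k$ (which is $\Lambda$-invariant by the same unitarity plus \eqref{eq:007}, checked exactly as for the integrand in \eqref{eq:029}) delivers \eqref{eq:031}.

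The only substantive obstacle is the Fubini--Tonelli justification; everything else is the routine transformation calculus already developed in Sections \ref{sec:003}--\ref{sec:005}.
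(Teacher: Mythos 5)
Your proof is correct and follows essentially the same unfolding argument as the paper: substitute the series into the Petersson inner product, use \eqref{eq:014} and the unitarity of $\rho$ and $v$ together with \eqref{eq:006}--\eqref{eq:007} to reduce each term to $\scal{F(\gamma.\tau)}{f(\gamma.\tau)}_{\C^p}\Im(\gamma.\tau)^k$, and unfold. Your explicit Fubini--Tonelli justification via Lemma \ref{lem:041}, Cauchy--Schwarz and the estimate \eqref{eq:012} is a welcome detail that the paper leaves implicit.
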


\begin{proof}
	We have
	\[ \begin{aligned}
	&\scal F{P_{\Lambda\backslash\Gamma,\rho}f}_{\calS(k,\rho,\Gamma)}
	\overset{\eqref{eq:029}}=\int_{\Gamma\backslash\calH}\scal{F(\tau)}{\left(P_{\Lambda\backslash\Gamma,\rho}f\right)(\tau)}_{\C^p}\,\Im(\tau)^k\,d\mathrm v(\tau)\\
	&\quad\overset{\eqref{eq:014}}=\int_{\Gamma\backslash\calH}\sum_{\gamma\in\Lambda\backslash\Gamma}\scal{\rho(\gamma)^{-1}\left(F\big|_k\gamma\right)(\tau)}{\rho(\gamma)^{-1}\left(f\big|_k\gamma\right)(\tau)}_{\C^p}\,\Im(\tau)^k\,d\mathrm v(\tau)\\
	&\quad\overset{\phantom{\eqref{eq:007}}}=\int_{\Gamma\backslash\calH}\sum_{\gamma\in\Lambda\backslash\Gamma}\scal{F(\gamma.\tau)}{f(\gamma.\tau)}_{\C^p}\,\Im(\gamma.\tau)^k\,d\mathrm v(\tau)\\
	&\quad\overset{\phantom{\eqref{eq:029}}}=\int_{\Lambda\backslash\calH}\scal{F(\tau)}{f(\tau)}_{\C^p}\,\Im(\tau)^k\, d\mathrm v(\tau),\qquad F\in\calS(k,\rho,\Gamma),
	\end{aligned} \]
	where in the third equality we used \eqref{eq:007}, \eqref{eq:006} and the unitarity of $ \rho $ and $ v $.
\end{proof}

Let $ \Gamma\ni-I_2 $ be a subgroup of finite index in $ \SL_2(\Z) $, let $ \rho:\Gamma\to\GL_p(\C) $ be a normal unitary representation, and let $ M\in\Z_{>0} $ such that $ \Gamma_\infty=\left<\pm T^M\right> $ (see \cite[Theorem 1.5.4(2)]{miyake}). By the unitarity of $ \rho $ and \ref{enum:008:2}, there exist $ U\in\U(p) $ and $ m_1,\ldots,m_p\in\left]0,1\right]\cap\Q $ such that
\begin{equation}\label{eq:026}
\rho\left(T^M\right)=e^{-2\pi i\kappa M}\,U^{-1}\,\diag(e^{2\pi im_1},\ldots,e^{2\pi im_p})\,U.
\end{equation}

\begin{proposition}\label{prop:030}
	Let $ k\in\R_{>2} $, $ \nu\in\Z_{\geq0} $, and $ j\in\left\{1,\ldots,p\right\} $. Denoting by $ e_j $ the $ j $th vector of the canonical basis for $ \C^p $, we have the following:
	\begin{enumerate}[label=\textup{(\arabic*)},leftmargin=*,align=left]
		\item\label{enum:028:1} The Poincar\'e series
		\[ \Psi_{k,\rho,\Gamma,\nu,U,j}:=P_{\Gamma_\infty\backslash\Gamma,\rho}\left(e^{2\pi i\frac{\nu+m_j}M\spacedcdot}\,U^{-1}e_j\right) \]
		converges absolutely and uniformly on compact sets in $ \calH $ and defines an element of $ \calS(k,\rho,\Gamma) $.
		\item\label{enum:028:2} For every $ F\in\calS(k,\rho,\Gamma) $, we have
		\begin{equation}\label{eq:027}
		\scal F{\Psi_{k,\rho,\Gamma,\nu,U,j}}_{\calS(k,\rho,\Gamma)}=b_\nu(j)\,\frac{M^k\,\Gamma(k-1)}{\left(4\pi(\nu+m_j)\right)^{k-1}},
		\end{equation}
		where $ b_\nu(j)\in\C $ are coefficients in the Fourier expansion
		\begin{equation}\label{eq:032}
		(UF)_j(\tau)=\sum_{n=0}^\infty b_n(j)\,e^{2\pi i\frac{n+m_j}M\tau},\qquad\tau\in\calH,
		\end{equation}
		and $ \Gamma $ on the right-hand side of \eqref{eq:027} denotes the gamma function $ \Gamma(s):=\int_0^\infty t^{s-1}\,e^{-t}\,dt,\ \Re(s)>0 $.
	\end{enumerate}
\end{proposition}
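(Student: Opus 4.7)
The plan is to deduce \ref{enum:028:1} by checking the hypotheses of Proposition~\ref{prop:010} for the function
\[ f(\tau):=e^{2\pi i\frac{\nu+m_j}{M}\tau}\,U^{-1}e_j, \]
and to deduce \ref{enum:028:2} by applying the unfolding identity of Lemma~\ref{lem:037} and integrating the resulting expression term-by-term against the Fourier expansion \eqref{eq:032}.

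For \ref{enum:011:1} I treat $-I_2$ and $T^M$ separately. For $-I_2$, normality gives $\rho(-I_2)=I_p$ and \eqref{eq:005} gives $f\big|_k(-I_2)=f$. For $T^M$ a direct computation works: using $j(T^M,\tau)=1$, $v(T^M)=e^{2\pi i\kappa M}$, and the formula \eqref{eq:026} for $\rho(T^M)^{-1}$, the factor $e^{-2\pi i\kappa M}$ coming from $v(T^M)^{-1}$ cancels the $e^{2\pi i\kappa M}$ in $\rho(T^M)^{-1}$, and the translation $f(\tau+M)=e^{2\pi i(\nu+m_j)}f(\tau)=e^{2\pi i m_j}f(\tau)$ is cancelled by the eigenvalue $e^{-2\pi i m_j}$ of the vector $U^{-1}e_j$ under $U^{-1}\diag(e^{-2\pi i m_\ell})U$. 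For \ref{enum:011:2}, unitarity of $U$ gives $\norm{f(\tau)}=e^{-2\pi\frac{\nu+m_j}{M}\Im\tau}$, and taking the strip $[0,M)\times\left]0,\infty\right[$ as a fundamental domain for $\Gamma_\infty$ acting on $\calH$, the integral reduces to
\[ M\int_0^\infty e^{-2\pi\frac{\nu+m_j}{M}y}\,y^{\frac k2-2}\,dy, \]
which is finite because $\nu+m_j>0$ gives exponential decay at infinity and $k>2$ ensures integrability near $0$. Proposition~\ref{prop:010}\ref{prop:010:2} then places $\Psi_{k,\rho,\Gamma,\nu,U,j}$ in $\calS(k,\rho,\Gamma)$.

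For \ref{enum:028:2}, Lemma~\ref{lem:037} rewrites $\scal F{\Psi_{k,\rho,\Gamma,\nu,U,j}}_{\calS(k,\rho,\Gamma)}$ as an integral over $\Gamma_\infty\backslash\calH$; the unitarity identity $\scal{F(\tau)}{U^{-1}e_j}_{\C^p}=\scal{UF(\tau)}{e_j}_{\C^p}=(UF)_j(\tau)$ turns the integrand into $e^{-2\pi i\frac{\nu+m_j}{M}\bar\tau}\,(UF)_j(\tau)\,\Im(\tau)^k$. Substituting \eqref{eq:032} and integrating over $[0,M)\times\left]0,\infty\right[$, the orthogonality integral $\int_0^M e^{2\pi i\frac{n-\nu}{M}x}\,dx=M\,\delta_{n,\nu}$ collapses the series to the single term $n=\nu$, and the remaining $y$-integral
\[ M\,b_\nu(j)\int_0^\infty e^{-4\pi\frac{\nu+m_j}{M}y}\,y^{k-2}\,dy \]
yields the asserted formula after the substitution $t=4\pi\frac{\nu+m_j}{M}y$, via the Euler integral for $\Gamma(k-1)$.

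The only genuine subtlety I expect is the justification of the interchange of summation and integration in the last step. Here Lemma~\ref{lem:041} gives a uniform bound $\norm{F(\tau)}\,\Im(\tau)^{k/2}\leq C$, so $\abs{(UF)_j(\tau)}\leq C\,\Im(\tau)^{-k/2}$, and combined with the exponential factor $e^{-2\pi\frac{\nu+m_j}{M}\Im\tau}$ this yields absolute integrability of the full integrand and hence justifies Fubini. Beyond this one-paragraph verification the argument is essentially bookkeeping, once Proposition~\ref{prop:010} and Lemma~\ref{lem:037} are in hand.
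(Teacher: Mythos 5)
Your argument is essentially the paper's: part \ref{enum:028:1} is obtained from Proposition~\ref{prop:010} by verifying \ref{enum:011:1} on the generators $-I_2$ and $T^M$ of $\Gamma_\infty$ and computing the integral in \ref{enum:011:2} over the strip $\left]0,M\right]\times\left]0,\infty\right[$, and part \ref{enum:028:2} by unfolding via Lemma~\ref{lem:037}, moving $U$ across the inner product, and integrating the expansion \eqref{eq:032} term by term; all of this matches the paper. The one step where your justification is looser than the paper's is the interchange of $\sum_n$ with $\int_0^M\int_0^\infty$: absolute integrability of the already-summed integrand $(UF)_j(\tau)\,e^{-2\pi i\frac{\nu+m_j}M\overline\tau}\,y^{k-2}$ does not by itself license Fubini, which would need $\sum_n\int\abs{b_n(j)}\,e^{-2\pi\frac{n+\nu+2m_j}My}\,y^{k-2}\,dy\,dx<\infty$, and the natural estimate $\sum_n\abs{b_n(j)}\,e^{-2\pi\frac{n+m_j}My}\lesssim y^{-\frac k2-1}$ as $y\to0$ leaves this in doubt for $2<k\leq4$. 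The paper's remedy uses exactly the bound you quote from Lemma~\ref{lem:041}, but in a different order: first write $\int_0^\infty=\lim_{R\to0+}\int_R^\infty$ (dominated convergence in $R$, with dominating function $C\,e^{-2\pi\frac{\nu+m_j}My}\,y^{\frac k2-2}$), then interchange sum and integral on $y\geq R$, where the Fourier series converges absolutely and uniformly. With that reordering your proof is complete.
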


\begin{proof}
	\ref{enum:028:1} By Proposition \ref{prop:010}, it suffices to prove that the function $ f:\calH\to\C^p $,
	\begin{equation}\label{eq:025}
	f(\tau):=e^{2\pi i\frac{\nu+m_j}M\tau}\,U^{-1}e_j, 
	\end{equation}
	satisfies \ref{enum:011:1} and \ref{enum:011:2} with $ \Lambda=\Gamma_\infty $. The property \ref{enum:011:1} is satisfied by \ref{enum:008:1}, \eqref{eq:005} and the equality
	\[ \left(f\big|_{k,\rho}T^M\right)(\tau)=e^{-2\pi i\kappa M}\,\rho\left(T^{-M}\right)f(\tau+M)\overset{\eqref{eq:026}}{\underset{\eqref{eq:025}}=}f(\tau),\qquad\tau\in\calH, \]
	and $ \ref{enum:011:2} $ holds by the following estimate: since $ k>2 $,
	\begin{align*}
	\int_{\Gamma_\infty\backslash\calH}\norm{f(\tau)}\,\Im(\tau)^{\frac k2}\,d\mathrm v(\tau)
	&\overset{\eqref{eq:025}}=\int_0^M\int_0^{\infty}e^{-2\pi\frac{\nu+m_j}My}\,\norm{U^{-1}e_j}\,y^{\frac k2-2}\,dy\,dx\\
	&\overset{\phantom{\eqref{eq:025}}}=\frac{M^{\frac k2}}{\left(2\pi(\nu+m_j)\right)^{\frac k2-1}}\,\int_0^\infty e^{-y}\,y^{\frac k2-2}\,dy<\infty.
	\end{align*}
	
	\ref{enum:028:2} First, we note that the Fourier expansion \eqref{eq:032} exists by the same argument as the Fourier expansion \eqref{eq:033}. Now we have
	\[ \begin{aligned}
	&\scal F{\Psi_{k,\rho,\Gamma,\nu,U,j}}_{\calS(k,\rho,\Gamma)}\\
	&\quad\overset{\eqref{eq:031}}=\int_{\Gamma_\infty\backslash\calH}\scal{F(\tau)}{e^{2\pi i\frac{\nu+m_j}M\tau}\,U^{-1}e_j}_{\C^p}\,\Im(\tau)^k\,d\mathrm v(\tau)\\
	&\quad\overset{\phantom{\eqref{eq:032}}}=\int_{\Gamma_\infty\backslash\calH}\scal{UF(\tau)}{e^{2\pi i\frac{\nu+m_j}M\tau}e_j}_{\C^p}\,\Im(\tau)^k\,d\mathrm v(\tau)\\
	&\quad\overset{\eqref{eq:032}}=\lim_{R\to0+}\int_0^M\int_R^\infty\sum_{n=0}^\infty b_n(j)\,e^{2\pi i\frac{n-\nu}Mx}\,e^{-2\pi\frac{n+\nu+2m_j}My}\,y^{k-2}\,dy\,dx\\
	&\quad\overset{\phantom{\eqref{eq:032}}}=b_\nu(j)\,M\,\lim_{R\to0+}\int_R^\infty e^{-4\pi\frac{\nu+m_j}My}\,y^{k-2}\,dy\\
	&\quad\overset{\phantom{\eqref{eq:032}}}=b_\nu(j)\,\frac{M^k\,\Gamma(k-1)}{(4\pi(\nu+m_j))^{k-1}},
	\end{aligned} \]
	where the second equality holds because $ U $ is a unitary matrix, and the fourth one by the dominated convergence theorem.
\end{proof}

We note the following direct consequence of Proposition \ref{prop:030}.

\begin{corollary}
	Let $ k\in\R_{>2} $. Then,
	\[ \calS(k,\rho,\Gamma)=\mathrm{span}_\C\left\{\Psi_{k,\rho,\Gamma,\nu,U,j}:\nu\in\Z_{\geq0},\ j\in\left\{1,\ldots,p\right\}\right\}. \]
\end{corollary}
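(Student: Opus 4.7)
The plan is to exploit the reproducing-type formula \eqref{eq:027} in Proposition \ref{prop:030}\ref{enum:028:2} together with the fact that $ \calS(k,\rho,\Gamma) $ is a finite-dimensional Hilbert space under the Petersson inner product. Specifically, I will show that the orthogonal complement of the proposed spanning set is trivial, and then invoke finite-dimensionality to conclude.

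First, let $ V:=\mathrm{span}_\C\left\{\Psi_{k,\rho,\Gamma,\nu,U,j}:\nu\in\Z_{\geq0},\ j\in\left\{1,\ldots,p\right\}\right\} $, which is a subspace of $ \calS(k,\rho,\Gamma) $ by Proposition \ref{prop:030}\ref{enum:028:1}. Since $ \calS(k,\rho,\Gamma) $ is a finite-dimensional Hilbert space, it suffices to verify that any $ F\in\calS(k,\rho,\Gamma) $ orthogonal to every $ \Psi_{k,\rho,\Gamma,\nu,U,j} $ vanishes identically.

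So suppose $ F\in V^\perp $. Fix $ j\in\{1,\ldots,p\} $ and let $ b_n(j) $, $ n\in\Z_{\geq0} $, be the Fourier coefficients of $ (UF)_j $ as in \eqref{eq:032}. Applying \eqref{eq:027} with any $ \nu\in\Z_{\geq0} $ yields
\[ 0=\scal{F}{\Psi_{k,\rho,\Gamma,\nu,U,j}}_{\calS(k,\rho,\Gamma)}=b_\nu(j)\,\frac{M^k\,\Gamma(k-1)}{(4\pi(\nu+m_j))^{k-1}}, \]
and since the prefactor is nonzero (here we use $ \nu+m_j>0 $, which holds because $ m_j\in\left]0,1\right] $), we conclude $ b_\nu(j)=0 $ for all $ \nu\in\Z_{\geq0} $. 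Hence $ (UF)_j\equiv 0 $ for every $ j $, i.e., $ UF\equiv 0 $, and since $ U $ is unitary (hence invertible) we get $ F\equiv 0 $.

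No real obstacle here: the argument is a direct deduction from Proposition \ref{prop:030}. The only point requiring a moment of care is verifying that $ \nu+m_j\neq0 $ so that the right-hand side of \eqref{eq:027} genuinely encodes the vanishing of $ b_\nu(j) $, but this is immediate from $ m_j\in\left]0,1\right] $ stipulated right before Proposition \ref{prop:030}.
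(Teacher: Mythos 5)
Your proof is correct and is precisely the argument the paper intends: the corollary is stated there as a ``direct consequence'' of Proposition \ref{prop:030} with the details omitted, and the standard way to fill them in is exactly your orthogonal-complement argument, using finite-dimensionality of $\calS(k,\rho,\Gamma)$ and the nonvanishing of the factor $M^k\,\Gamma(k-1)/(4\pi(\nu+m_j))^{k-1}$ (which you rightly justify via $m_j\in\left]0,1\right]$) to force all Fourier coefficients $b_\nu(j)$ of $(UF)_j$ to vanish.
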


Finally, applying our non-vanishing criterion (Theorem \ref{thm:019}), we obtain the following result on the non-vanishing of VVMFs $ \Psi_{k,\rho,\Gamma,\nu,U,j} $.

\begin{theorem}
	Let $ k\in\R_{>2} $ and $ N\in\Z_{>0} $. Let $ \Gamma\in\left\{\Gamma_0(N),\left<-I_2\right>\Gamma_1(N),\left<-I_2\right>\Gamma(N)\right\} $ and
	\[ M:=\begin{cases}
	1,&\text{if }\Gamma\in\left\{\Gamma_0(N),\left<-I_2\right>\Gamma_1(N)\right\}\\
	N,&\text{if }\Gamma=\left<-I_2\right>\Gamma(N).
	\end{cases} \]
	Let $ \rho:\Gamma\to\GL_p(\C) $ be a normal unitary representation, and fix $ U\in\U(p) $ and $ m_1,\ldots,m_p\in\left]0,1\right]\cap\Q $ such that
	\[ \rho\left(T^M\right)=e^{-2\pi i\kappa M}\,U^{-1}\,\diag(e^{2\pi im_1},\ldots,e^{2\pi im_p})\,U. \]
	Then, $ \Psi_{k,\rho,\Gamma,\nu,U,j}\not\equiv0 $ if
	\begin{equation}\label{eq:034}
	\nu+m_j\leq\frac{MN}{4\pi}\left(k-\frac83\right).
	\end{equation}
\end{theorem}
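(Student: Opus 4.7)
The plan is to apply Theorem~\ref{thm:019}\ref{thm:019:1} to the seed function
\[ f(\tau) := e^{2\pi i(\nu+m_j)\tau/M}\,U^{-1}e_j \]
from the definition of $\Psi_{k,\rho,\Gamma,\nu,U,j}$, which satisfies the hypotheses \ref{enum:018:1} and \textup{(f2')} of Theorem~\ref{thm:019} by the argument in the proof of Proposition~\ref{prop:030}\ref{enum:028:1} combined with Proposition~\ref{prop:010}\ref{prop:010:1}. I would take $\Lambda := \Gamma_\infty$ and use the Borel set
\[ A := \left\{x+iy\in\calH : 0\leq x<M,\ y>1/N\right\}. \]

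The first step is to verify \ref{thm:019:1:1}. In all three cases $\Gamma\subseteq\Gamma_0(N)$, so every $\gamma=\begin{pmatrix}a&b\\c&d\end{pmatrix}\in\Gamma$ with $c\neq 0$ satisfies $|c|\geq N$; and $\Gamma_\infty=\langle\pm T^M\rangle$ with $M$ as in the statement. If $\tau,\gamma.\tau\in A$ for some $\gamma\in\Gamma$, then either $c=0$, in which case $\gamma=\pm T^{\ell M}$ and the strip constraint $0\leq x<M$ forces $\ell=0$ so $\gamma.\tau=\tau$; or $c\neq 0$, in which case
\[ \Im(\gamma.\tau) = \frac{\Im(\tau)}{|c\tau+d|^2} \leq \frac{1}{c^2\Im(\tau)} < \frac{1}{N^2\cdot(1/N)} = \frac{1}{N}, \]
contradicting $\gamma.\tau\in A$.

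The second step is to compute the two integrals in \ref{thm:019:1:2}. Since $U\in\U(p)$, one has $\|f(\tau)\|=e^{-2\pi(\nu+m_j)y/M}$ for $\tau=x+iy$, and $\Lambda.A=\left\{x+iy:y>1/N\right\}$, with $[0,M)\times(1/N,\infty)$ a fundamental domain for $\Lambda\backslash\Lambda.A$ and $[0,M)\times(0,1/N]$ one for $\Lambda\backslash(\Lambda.A)^c$. The substitution $t:=2\pi(\nu+m_j)y/M$ then converts the inequality \ref{thm:019:1:2} into
\[ \int_c^\infty e^{-t}\,t^{s-1}\,dt > \int_0^c e^{-t}\,t^{s-1}\,dt, \]
where $s:=k/2-1>0$ and $c:=2\pi(\nu+m_j)/(MN)$.

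Finally, the hypothesis \eqref{eq:034} translates exactly to $c\leq s-1/3$, so it suffices to show that the median of the Gamma distribution with shape parameter $s$ strictly exceeds $s-1/3$. This is the main obstacle: it is a nontrivial but classical bound (due to Choi, related to the Chen--Rubin conjecture on Gamma medians). The rest of the argument—verification of \ref{thm:019:1:1} and the reduction to the median inequality—is a routine consequence of the level structure of the three congruence groups in question and the explicit exponential form of $f$.
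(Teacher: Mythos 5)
Your proposal is correct and follows essentially the same route as the paper: apply Theorem~\ref{thm:019}\ref{thm:019:1} with $\Lambda=\Gamma_\infty$ and the strip $A$ of width $M$ above height $1/N$, verify \ref{thm:019:1:1} via the lower bound $\abs{c}\geq N$, and reduce \ref{thm:019:1:2} to the condition that $2\pi(\nu+m_j)/(MN)$ lies below the median of the gamma distribution with shape $k/2-1$. The median bound $a-\tfrac13<\M_{\Gamma(a,1)}$ that you flag as the remaining input is exactly the externally cited ingredient in the paper as well (there attributed to Chen and Rubin's Theorem~1), so nothing is missing.
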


\begin{proof}
	We apply Theorem \ref{thm:019}\ref{thm:019:1} with
	\[ A:=\left]0,M\right]\times\left]\frac1N,\infty\right[. \]
	
	Let us prove that the set $ A $ defined in this way satisfies \ref{thm:019:1:1}. Let $ \tau\in A $ and $ \gamma=\begin{pmatrix}a&b\\c&d\end{pmatrix}\in\Gamma $ such that $ \gamma.\tau\in A $. Then $ c=0 $, because otherwise we would have $ \abs c\geq N $ and consequently
	\[ \frac1N<\Im(\gamma.\tau)\overset{\eqref{eq:007}}=\frac{\Im(\tau)}{(c\Re(\tau)+d)^2+(c\Im(\tau))^2}\leq\frac{\Im(\tau)}{(c\Im(\tau))^2}=\frac1{c^2\Im(\tau)}<\frac1{N^2\cdot\frac1N}=\frac1N. \]
	Thus, $ \gamma\in\Gamma_\infty=\left<\pm T^M\right> $, hence $ \gamma.\tau=\tau+nM $ for some $ n\in\Z $. The fact that $ \Re(\tau),\Re(\gamma.\tau)\in\left]0,M\right] $ implies that $ n=0 $, hence $ \gamma.\tau=\tau $, which proves \ref{thm:019:1:1}.
	
	On the other hand, our set $ A $ satisfies \ref{thm:019:1:2} if and only if
	\[ \begin{aligned}
	\int_{\Gamma_\infty\backslash\Gamma_\infty.A}&\norm{e^{2\pi i\frac{\nu+m_j}M\tau}\,U^{-1}e_j}\,\Im(\tau)^{\frac k2}\,d\mathrm v(\tau)\\
	&>\int_{\Gamma_\infty\backslash(\Gamma_\infty.A)^c}\norm{e^{2\pi i\frac{\nu+m_j}M\tau}\,U^{-1}e_j}\,\Im(\tau)^{\frac k2}\,d\mathrm v(\tau),
	\end{aligned} \]
	i.e., recalling that $ U\in\U(p) $ and that $ \left]0,M\right]\times\left]0,\infty\right[ $ is a fundamental domain for $ \Gamma_\infty $ in $ \calH $, if and only if we have
	\[ \int_0^M\int_{\frac1N}^\infty e^{-2\pi\frac{\nu+m_j}My}\,y^{\frac k2-2}\,dy\,dx>\int_0^M\int_0^{\frac1N} e^{-2\pi\frac{\nu+m_j}My}\,y^{\frac k2-2}\,dy\,dx \]
	or equivalently
	\[ \int_{\frac{2\pi(\nu+m_j)}{MN}}^\infty t^{\frac k2-2}\, e^{-t}\,dt>\int_0^{\frac{2\pi(\nu+m_j)}{MN}}t^{\frac k2-2}\,e^{-t}\,dt, \]
	i.e., if and only if
	\[ \frac{2\pi(\nu+m_j)}{MN}<\M_{\Gamma\left(\frac k2-1,1\right)}, \]
	where $ \M_{\Gamma(a,b)}\in\R_{>0} $ is the median of the gamma distribution $ \Gamma(a,b) $ with parameters $ a,b\in\R_{>0} $, determined by the condition
	\[ \int_0^{\M_{\Gamma(a,b)}}x^{a-1}\,e^{-bx}\,dx=\int_{\M_{\Gamma(a,b)}}^\infty x^{a-1}\,e^{-bx}\,dx. \]
	Applying Chen and Rubin's estimate \cite[Theorem 1]{chen_rubin}, stating that
	\[ a-\frac13<M_{\Gamma(a,1)}<a,\qquad a\in\R_{>0}, \]
	it follows that \ref{thm:019:1:2} holds if
	\[ \frac{2\pi(\nu+m_j)}{MN}\leq\frac k2-\frac43, \]
	which is equivalent to \eqref{eq:034}.	
\end{proof}

\section{Elliptic vector-valued Poincar\'e series}\label{sec:007}

In this section, we use Proposition \ref{prop:010} and Theorem \ref{thm:019} to construct and study the non-vanishing of the vector-valued analogues of elliptic Poincar\'e series. The latter cusp forms were studied already by Petersson \cite[(8)]{petersson}.

Let $ \Gamma\ni-I_2 $ be a subgroup of finite index in $ \SL_2(\Z) $, and let $ \rho:\Gamma\to\GL_p(\C) $ be a normal unitary representation. 

\begin{proposition}\label{prop:035}
	Let $ k\in\R_{>2} $, $ \nu\in\Z_{\geq0} $, $ \xi\in\calH $. Then, we have the following: 
	\begin{enumerate}[label=\textup{(\arabic*)},leftmargin=*,align=left]
		\item\label{enum:035:1} Let $ u\in\C^p $. The Poincar\'e series
		\[ \Phi_{k,\rho,\Gamma,\nu,\xi,u}:=P_{\left<-I_2\right>\backslash\Gamma,\rho}\left(\frac{(\spacedcdot-\xi)^\nu}{\left(\spacedcdot-\overline\xi\right)^{\nu+k}}\,u\right) \]
		converges absolutely and uniformly on compact sets in $ \calH $ and defines an element of $ \calS(k,\rho,\Gamma) $.
		\item\label{enum:035:2} For every $ j\in\left\{1,\ldots,p\right\} $, we have
		\begin{equation}\label{eq:036}
		\scal F{\Phi_{k,\rho,\Gamma,\nu,\xi,e_j}}_{\calS(k,\rho,\Gamma)}=\frac{4\pi}{(4\Im(\xi))^k}\, \frac{\nu!}{(k-1)k\cdots(k+\nu-1)}\,b_{\nu,\xi}(j)
		\end{equation}
		for every $ F=(F_1,\ldots,F_p)\in\calS(k,\rho,\Gamma) $, where $ b_{\nu,\xi}(j)\in\C $ are coefficients in the expansion
		\begin{equation}\label{eq:037}
		\left(\tau-\overline\xi\right)^k\,F_j(\tau)=\sum_{n=0}^\infty b_{n,\xi}(j)\,\left(\frac{\tau-\xi}{\tau-\overline\xi}\right)^n,\qquad\tau\in\calH.
		\end{equation}
	\end{enumerate}
\end{proposition}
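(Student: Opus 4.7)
The plan is to prove \ref{enum:035:1} by verifying the hypotheses of Proposition \ref{prop:010}\ref{prop:010:2} with $\Lambda=\left<-I_2\right>$, and to prove \ref{enum:035:2} by applying Lemma \ref{lem:037} and then evaluating the resulting integral via the Cayley-type biholomorphism $w:\calH\to\mathbb D$, $w(\tau):=\frac{\tau-\xi}{\tau-\overline\xi}$, centered at $\xi$.

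For \ref{enum:035:1}, set $f(\tau):=\frac{(\tau-\xi)^\nu}{(\tau-\overline\xi)^{\nu+k}}u$. Since $\tau-\overline\xi$ has strictly positive imaginary part on $\calH$, its argument lies in $\left]0,\pi\right[$ and stays away from the branch cut of $z^{\nu+k}$, so $f\in\Hol(\calH)^p$. Property \ref{enum:011:1} reduces to $f\big|_{k,\rho}(-I_2)=f$, which follows from $\rho(-I_2)=I_p$ (normality \ref{enum:008:1}) and \eqref{eq:005}. Property \ref{enum:011:2} follows from the pointwise bound $|\tau-\xi|\leq|\tau-\overline\xi|$: since $-I_2$ acts trivially on $\calH$, the integrand is dominated by $\abs{\tau-\overline\xi}^{-k}\,\Im(\tau)^{k/2-2}\norm u$, whose integral over $\calH$ converges for $k>2$. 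Proposition \ref{prop:010}\ref{prop:010:2} then yields $\Phi_{k,\rho,\Gamma,\nu,\xi,u}\in\calS(k,\rho,\Gamma)$.

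For \ref{enum:035:2}, note that $G(\tau):=(\tau-\overline\xi)^kF_j(\tau)$ is holomorphic on $\calH$, so its pullback to $\mathbb D$ via $w^{-1}$ is holomorphic there, and its Taylor series at $0$ is precisely \eqref{eq:037}. Lemma \ref{lem:037} with $\Lambda=\left<-I_2\right>$, combined with $\overline{(\tau-\overline\xi)^k}=(\overline\tau-\xi)^k$ (valid because $\arg(\tau-\overline\xi)\in\left]0,\pi\right[$ so $\arg(\overline\tau-\xi)=-\arg(\tau-\overline\xi)\in\left]-\pi,0\right[$ lies in the principal branch), gives
\[ \scal F{\Phi_{k,\rho,\Gamma,\nu,\xi,e_j}}_{\calS(k,\rho,\Gamma)}=\int_\calH \frac{(\overline\tau-\overline\xi)^\nu}{(\overline\tau-\xi)^{\nu+k}}\,F_j(\tau)\,\Im(\tau)^{k-2}\,dx\,dy. \]
Substituting the expansion $F_j(\tau)=(\tau-\overline\xi)^{-k}\sum_{n=0}^\infty b_{n,\xi}(j)\,w(\tau)^n$ and interchanging sum and integral (justified by Fubini--Tonelli together with the bounds from part \ref{enum:035:1}) reduces the problem to computing, for each $n\geq0$, the integral $I_n$ of $\frac{(\overline\tau-\overline\xi)^\nu(\tau-\xi)^n}{(\overline\tau-\xi)^{\nu+k}(\tau-\overline\xi)^{n+k}}\,\Im(\tau)^{k-2}$ over $\calH$.

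The main calculation is the substitution $\tau\mapsto w$ in $I_n$. Using
\[ \tau-\overline\xi=\frac{2i\Im(\xi)}{1-w},\quad\tau-\xi=w(\tau-\overline\xi),\quad\Im(\tau)=\Im(\xi)\,\frac{1-|w|^2}{|1-w|^2},\quad dx\,dy=\frac{4\Im(\xi)^2}{|1-w|^4}\,du\,dv \]
(where $w=u+iv$), the non-integer factors combine via $(\tau-\overline\xi)^k(\overline\tau-\xi)^k=|\tau-\overline\xi|^{2k}=(2\Im(\xi))^{2k}/|1-w|^{2k}$, and all remaining integer powers of $1-w$, $1-\overline w$ and $|1-w|$ cancel, so
\[ I_n=\frac{4}{(4\Im(\xi))^k}\int_{\mathbb D}\overline w^\nu w^n(1-|w|^2)^{k-2}\,du\,dv. \]
In polar coordinates $w=re^{i\phi}$, the $\phi$-integral yields $2\pi\delta_{n,\nu}$ and the substitution $t=r^2$ reduces the radial integral to $\tfrac12B(\nu+1,k-1)=\frac{\nu!\,\Gamma(k-1)}{2\,\Gamma(k+\nu)}$; using $\Gamma(k+\nu)/\Gamma(k-1)=(k-1)k\cdots(k+\nu-1)$ and assembling constants produces \eqref{eq:036}. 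The main technical obstacle is the branch-cut bookkeeping for the real exponent $k$, which is handled by the argument ranges noted above, together with the verification that the sum/integral interchange is legitimate; both are routine once the arguments are pinned down.
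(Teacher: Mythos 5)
Your proof follows essentially the same route as the paper's: part \ref{enum:035:1} via Proposition \ref{prop:010} and the bound $\abs{\tau-\xi}\leq\abs{\tau-\overline\xi}$ (the paper first normalizes $\xi$ to $i$ by the substitution $\tau\mapsto n_{\Re(\xi)}a_{\Im(\xi)}.\tau$, but the resulting estimate is the same), and part \ref{enum:035:2} via Lemma \ref{lem:037} followed by the Cayley substitution $w=\frac{\tau-\xi}{\tau-\overline\xi}$, polar coordinates, and the Beta integral; the paper composes the translation to $i$ with the Cayley map at $i$ and evaluates the radial integral by $\nu$-fold integration by parts, which is identical in substance. All of your branch-cut bookkeeping and the Jacobian computation check out.

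The one soft spot is the appeal to ``Fubini--Tonelli'' for the sum--integral interchange. Tonelli would require
\[ \sum_{n=0}^\infty\abs{b_{n,\xi}(j)}\int_{\calD}\abs w^{n+\nu}\left(1-\abs w^2\right)^{k-2}\,du\,dv<\infty, \]
and since the integral here decays only like $n^{-(k-1)}$ while the crude Cauchy-estimate bound coming from Lemma \ref{lem:041} gives only $\abs{b_{n,\xi}(j)}=O\big(n^{k/2}\big)$, this is guaranteed only for $k>4$; the ``bounds from part \ref{enum:035:1}'' control the full integrand $\overline w^\nu\sum_nb_{n,\xi}(j)w^n(1-\abs w^2)^{k-2}$, not the termwise absolute sum. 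The paper avoids this by integrating first over $\abs w\leq R<1$, where the power series converges uniformly so the interchange is immediate, and then letting $R\nearrow1$ (legitimate because the full integrand lies in $L^1(\calD)$ and only the $n=\nu$ term survives the angular integration, its radial integral converging monotonically for $k>1$). This is a routine repair rather than a flaw in the approach, but as written the justification does not cover the whole range $k>2$.
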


\begin{proof}
	\ref{enum:035:1} The claim follows from Proposition \ref{prop:010} as soon as we prove that $ f:\calH\to\C^p $,
	\[ f(\tau):=\frac{(\tau-\xi)^{\nu}}{\left(\tau-\overline\xi\right)^{\nu+k}}\,u, \]
	satisfies \ref{enum:011:2}. Applying the change of variables $ \tau\mapsto n_{\Re(\xi)}a_{\Im(\xi)}.\tau $, we obtain
	\[ \begin{aligned}
	\int_{\left<-I_2\right>\backslash\calH}\norm{f(\tau)}\,\Im(\tau)^{\frac k2}\,d\mathrm v(\tau)
	&=\frac{\norm{u}}{\Im(\xi)^{\frac k2}}\,\int_{\calH}\abs{\frac{\tau-i}{\tau+i}}^\nu\,\frac{\Im(\tau)^{\frac k2}}{\abs{\tau+i}^k}\,d\mathrm v(\tau)\\ 
	&\leq\frac{\norm u}{\Im(\xi)^{\frac k2}}\,\int_\calH\frac{\Im(\tau)^{\frac k2}}{\abs{\tau+i}^k}\,d\mathrm v(\tau)\\
	&=\frac{\norm u}{\Im(\xi)^{\frac k2}}\,\int_0^\infty\int_\R\frac{y^{\frac k2-2}}{\left(x^2+(y+1)^2\right)^{\frac k2}}\,dx\,dy\\
	&=\frac{\norm u}{\Im(\xi)^{\frac k2}}\,\int_\R\frac{dx}{\left(x^2+1\right)^{\frac k2}}\,\int_0^\infty\frac{y^{\frac k2-2}}{(y+1)^{k-1}}\,dy,
	\end{aligned} \]
	introducing the change of variables $ x\mapsto (y+1)x $ in the last equality. Since $ k>2 $, the right-hand side is finite, which proves \ref{enum:011:2}.
	
	\ref{enum:035:2} Let $ F\in\calS(k,\rho,\Gamma) $. By Lemma \ref{lem:037}, we have
	\[ \scal F{\Phi_{k,\rho,\Gamma,\nu,\xi,e_j}}_{\calS(k,\rho,\Gamma)}
	=\int_\calH\scal{F(\tau)}{\frac{(\tau-\xi)^\nu}{\left(\tau-\overline\xi\right)^{\nu+k}}\,e_j}_{\C^p}\,\Im(\tau)^k\,d\mathrm v(\tau). \]
	Using \eqref{eq:037} and introducing the change of variables $ \tau\mapsto n_{\Re(\xi)}a_{\Im(\xi)}.\tau $, we see that the right-hand side equals
	\[ \Im(\xi)^{-k}\,\int_\calH\sum_{n=0}^\infty b_{n,\xi}(j)\,\left(\frac{\tau-i}{\tau+i}\right)^{n-\nu}\,\abs{\frac{\tau-i}{\tau+i}}^{2\nu}\,\frac{\Im(\tau)^k}{\abs{\tau+i}^{2k}}\,d\mathrm v(\tau), \]
	which, introducing the substitution $ w=\frac{\tau-i}{\tau+i} $ (so $ dx\,dy=\frac4{\abs{1-w}^4}dw $) and denoting $ \calD:=\left\{w\in\C:\abs w<1\right\} $, equals
	\[ \frac4{(4\Im(\xi))^k}\,\int_\calD\sum_{n=0}^\infty b_{n,\xi}(j)\,w^{n-\nu}\,\abs w^{2\nu}\left(1-\abs w^2\right)^{k-2}\,dw. \]
	Going over to polar coordinates, we obtain
	\[ \frac4{(4\Im(\xi))^k}\,\lim_{R\nearrow1}\int_0^R\int_0^{2\pi}\sum_{n=0}^\infty b_{n,\xi}(j)\,r^{n+\nu+1}\,\left(1-r^2\right)^{k-2}\,e^{i(n-\nu)t}\,dt\,dr, \]
	i.e., applying the dominated convergence theorem,
	\[ \begin{aligned}
	\frac4{(4\Im(\xi))^k}&\,\lim_{R\nearrow1}\sum_{n=0}^\infty b_{n,\xi}(j)\,\int_0^Rr^{n+\nu+1}\,\left(1-r^2\right)^{k-2}\,dr\,\int_0^{2\pi}e^{i(n-\nu)t}\,dt\\
	&=\frac{8\pi}{(4\Im(\xi))^k}\,b_{\nu,\xi}(j)\,\int_0^1r^{2\nu+1}\,\left(1-r^2\right)^{k-2}\,dr\\
	&=\frac{4\pi}{(4\Im(\xi))^k}\,b_{\nu,\xi}(j)\,\frac{\nu!}{(k-1)k\cdots(k+\nu-1)},
	\end{aligned} \]	
	where the last equality is obtained by $ \nu $-fold partial integration after substituting $ t=r^2 $.
\end{proof}

As a direct consequence of Proposition \ref{prop:035}, we obtain the following corollary.

\begin{corollary}
	Let $ k\in\R_{>2} $ and $ \xi\in\calH $. Then,
	\[ \calS(k,\rho,\Gamma)=\mathrm{span}_\C\left\{\Phi_{k,\rho,\Gamma,\nu,\xi,e_j}:\nu\in\Z_{\geq0},\ j\in\left\{1,\ldots,p\right\}\right\}. \]
\end{corollary}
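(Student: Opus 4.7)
\begin{proof}
Since $\calS(k,\rho,\Gamma)$ is a finite-dimensional Hilbert space under the Petersson inner product, it suffices to prove that no nonzero element is orthogonal to every $\Phi_{k,\rho,\Gamma,\nu,\xi,e_j}$, $\nu\in\Z_{\geq0}$, $j\in\{1,\ldots,p\}$.

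Suppose therefore that $F=(F_1,\ldots,F_p)\in\calS(k,\rho,\Gamma)$ satisfies
\[ \scal F{\Phi_{k,\rho,\Gamma,\nu,\xi,e_j}}_{\calS(k,\rho,\Gamma)}=0\qquad\text{for all }\nu\in\Z_{\geq0}\text{ and }j\in\{1,\ldots,p\}. \]
By Proposition \ref{prop:035}\ref{enum:035:2}, and the fact that the factor $\frac{4\pi}{(4\Im(\xi))^k}\cdot\frac{\nu!}{(k-1)k\cdots(k+\nu-1)}$ is a nonzero real number, it follows that $b_{\nu,\xi}(j)=0$ for all such $\nu$ and $j$.

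Fix $j\in\{1,\ldots,p\}$. The expansion \eqref{eq:037} says that the holomorphic function $\tau\mapsto(\tau-\overline\xi)^kF_j(\tau)$ on $\calH$ becomes, after the biholomorphism $w=\frac{\tau-\xi}{\tau-\overline\xi}$ from $\calH$ onto the open unit disk $\calD$, a holomorphic function on $\calD$ whose Taylor coefficients at $w=0$ are precisely the $b_{n,\xi}(j)$. Since all these Taylor coefficients vanish, this function is identically zero on $\calD$, hence $(\tau-\overline\xi)^kF_j(\tau)\equiv0$ on $\calH$, and therefore $F_j\equiv0$. As $j$ was arbitrary, $F=0$, which completes the proof.
\end{proof}

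The only step requiring care is the passage from the vanishing of the coefficients $b_{n,\xi}(j)$ in \eqref{eq:037} to the vanishing of $F_j$: one must verify that the series in \eqref{eq:037} is a bona fide Taylor series on the unit disk under $w=\frac{\tau-\xi}{\tau-\overline\xi}$, which is immediate from the fact that this map is a biholomorphism $\calH\to\calD$ sending $\xi\mapsto0$, and that multiplication by $(\tau-\overline\xi)^k$ only removes a nonvanishing factor on $\calH$.
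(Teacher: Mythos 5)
Your proof is correct and is precisely the argument the paper intends: the corollary is stated there as a direct consequence of Proposition \ref{prop:035}, and the intended route is exactly your orthogonality argument in the finite-dimensional Hilbert space $\calS(k,\rho,\Gamma)$, using \eqref{eq:036} to kill all coefficients $b_{\nu,\xi}(j)$ and the biholomorphism $w=\frac{\tau-\xi}{\tau-\overline\xi}$ to conclude $F_j\equiv0$. Your added remark about $(\tau-\overline\xi)^k$ being a nonvanishing holomorphic factor on $\calH$ (under the paper's branch convention) is a worthwhile detail the paper leaves implicit.
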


In the following theorem, we give a result on the non-vanishing of elliptic VVPSs. To state it, we need the notion of the median $ \M_{B(a,b)}\in\left]0,1\right[ $ of the beta distribution $ B(a,b) $ with parameters $ a,b\in\R_{>0} $, defined by the condition
\[ \int_0^{\M_{B(a,b)}}x^{a-1}\,(1-x)^{b-1}\,dx=\int_{\M_{B(a,b)}}^1x^{a-1}\,(1-x)^{b-1}\,dx. \]

\begin{theorem}
	Let $ N\in\Z_{\geq2} $, and let $ \Gamma\ni-I_2 $ be a subgroup of finite index in $ \left<-I_2\right>\Gamma(N) $. Let $ \rho:\Gamma\to\GL_p(\C) $ be a normal unitary representation. Let $ k\in\R_{>2} $, $ \nu\in\Z_{\geq0} $, and $ u\in\C^p\setminus\left\{0\right\} $. If
	\begin{equation}\label{eq:042}
	N>\frac{4\left({\M_{B\left(\frac\nu2+1,\frac k2-1\right)}}\right)^{\frac12}}{1-\M_{B\left(\frac\nu2+1,\frac k2-1\right)}},
	\end{equation}
	then 
	\[ \Phi_{k,\rho,\Gamma,\nu,i,u}\not\equiv0. \]
\end{theorem}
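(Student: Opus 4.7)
The plan is to apply Theorem~\ref{thm:019}\ref{thm:019:2} with $\Lambda=\left<-I_2\right>$ (which acts trivially on $\calH$), taking $C$ to be the preimage in $\SL_2(\R)$ of a hyperbolic disk $B$ around $i\in\calH$. Writing $w=\frac{\tau-i}{\tau+i}$ for the Cayley transform identifying $\calH$ with $\calD:=\left\{w\in\C:\abs w<1\right\}$, I would let $B$ correspond to the Euclidean disk $\left\{\abs w\leq r_0\right\}$ for an $r_0\in\left]0,1\right[$ chosen so that simultaneously
\[ r_0^2>\M_{B\left(\frac\nu2+1,\frac k2-1\right)}\qquad\text{and}\qquad N>\frac{4r_0}{1-r_0^2}; \]
such an $r_0$ exists by the hypothesis \eqref{eq:042} together with a continuity argument. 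Note that \textup{(f1)} holds by Proposition~\ref{prop:035}\ref{enum:035:1}, whose proof also establishes \textup{(f2')}, so Theorem~\ref{thm:019} does apply.

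Since $K=\SO_2(\R)$ fixes $i$ and $B$ is a hyperbolic disk around $i$, the set $B$ is $K$-stable, hence $CK=C$ and, because $-I_2\in K$, $\Lambda C=C$; this gives \textup{(C1)}. For \textup{(C2)} I would use the Cartan identity $\operatorname{tr}\left(\gamma\gamma^T\right)=a^2+b^2+c^2+d^2=2\cosh(2t_\gamma)$ coming from the decomposition $\gamma=\kappa_{\theta_1}h_{t_\gamma}\kappa_{\theta_2}$, which together with $\abs{w_\gamma}=\tanh(t_\gamma)$ (where $w_\gamma$ is the Cayley coordinate of $\gamma.i$) yields
\[ \abs{w_\gamma}^2=\frac{a^2+b^2+c^2+d^2-2}{a^2+b^2+c^2+d^2+2}. \]
If $\gamma\in CC^{-1}$, then $\gamma.B\cap B\neq\emptyset$, so the hyperbolic triangle inequality yields $\abs{w_\gamma}\leq R:=\frac{2r_0}{1+r_0^2}$ (the image under $w\mapsto\abs w$ of a point at twice the hyperbolic radius of $B$). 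On the other hand, because $\Gamma\subseteq\left<-I_2\right>\Gamma(N)$, every $\gamma=\begin{pmatrix}a&b\\c&d\end{pmatrix}\in\Gamma\setminus\left\{\pm I_2\right\}$ satisfies $N\mid b$ and $N\mid c$ with at least one of $b,c$ non-zero (else $a=d=\pm1$ and $\gamma=\pm I_2$), so $a^2+b^2+c^2+d^2\geq N^2+2$ and therefore $\abs{w_\gamma}^2\geq\frac{N^2}{N^2+4}$. An elementary computation shows that $\frac{N^2}{N^2+4}>R^2$ is equivalent to $N>\frac{4r_0}{1-r_0^2}$, which holds by our choice of $r_0$; this establishes \textup{(C2)}.

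For \textup{(C3)}, a short computation in disk coordinates gives $\norm{f(\tau)}\Im(\tau)^{\frac k2}=2^{-k}\norm u\,\abs w^\nu(1-\abs w^2)^{\frac k2}$, a function only of $\abs w$. Using Lemma~\ref{lem:025} to transfer the integrals back to $\calH$, passing to polar coordinates in $w$, and substituting $s=\abs w^2$, the integral over $\Lambda\backslash\Lambda C$ becomes a positive constant times $\int_0^{r_0^2}s^{\frac\nu2}(1-s)^{\frac{k-4}2}\,ds$, while the total integral over $\Lambda\backslash\SL_2(\R)$ yields the same constant times the complete beta integral $B\left(\frac\nu2+1,\frac k2-1\right)$. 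Thus \textup{(C3)} is equivalent to $r_0^2>\M_{B\left(\frac\nu2+1,\frac k2-1\right)}$, which holds by construction, and Theorem~\ref{thm:019}\ref{thm:019:2} then gives the desired non-vanishing.

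The main obstacle is to match the algebraic bound coming from \textup{(C2)} with the hypothesis \eqref{eq:042}: one must verify the identity $\frac{2R}{\sqrt{1-R^2}}=\frac{4r_0}{1-r_0^2}$ obtained by substituting $R=\frac{2r_0}{1+r_0^2}$, and observe that in the limit $r_0\searrow\sqrt{\M_{B(\nu/2+1,k/2-1)}}$ this bound tends precisely to $\frac{4\sqrt{\M_{B(\nu/2+1,k/2-1)}}}{1-\M_{B(\nu/2+1,k/2-1)}}$, matching \eqref{eq:042}. Everything else is a routine exercise in disk-model geometry together with the application of Theorem~\ref{thm:019}.
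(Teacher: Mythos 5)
Your proposal is correct and follows essentially the same route as the paper: both apply Theorem~\ref{thm:019}\ref{thm:019:2} with $\Lambda=\left<-I_2\right>$ and $C$ a bi-$K$-invariant set $K\left\{h_t:t\in[0,r]\right\}K$ (your hyperbolic disk about $i$, with $r_0=\tanh r$), verify \ref{thm:019:2:2} by comparing $\norm\gamma^2=2\cosh(2t_\gamma)$ against the lower bound $N^2+2$ on $\Gamma\setminus\left<-I_2\right>$, and reduce \ref{thm:019:2:3} to the median condition for $B\left(\frac\nu2+1,\frac k2-1\right)$. The only difference is cosmetic: you compute in the disk model and spell out the steps that the paper delegates to \cite[Lemma 6-20]{muicJNT} and \cite[proof of Proposition 6.7]{zunarManu}, and your substitution $s=\abs w^2=\tanh^2t$ is exactly the one that converts the paper's integral condition \eqref{eq:041} into the beta integral.
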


\begin{proof}
	We recall that $ \Phi_{k,\rho,\Gamma,\nu,i,u}=P_{\left<-I_2\right>\backslash\Gamma,\rho}f $ for $ f:\calH\to\C^p $,
	\begin{equation}\label{eq:038}
	f(\tau):=\frac{(\tau-i)^\nu}{(\tau+i)^{\nu+k}}\,u. 
	\end{equation}
	To apply Theorem \ref{thm:019}, it suffices to find a Borel-measurable set $ C\subseteq\SL_2(\R) $ satisfying \ref{thm:019:2:1}--\ref{thm:019:2:3} with $ \Lambda=\left<-I_2\right> $. Following the idea of \cite[Lemma 6-5]{muicJNT}, let us look for such a set $ C $ of the form
	\[ C_r:=K\left\{h_t:t\in\left[0,r\right]\right\}K \]
	with $ r\in\R_{>0} $. For every $ r\in\R_{>0} $, the set $ C_r $ obviously satisfies \ref{thm:019:2:1}. Next, by \cite[Lemma 6-20]{muicJNT} we have
	\[ \max_{g\in C_rC_r^{-1}}\norm g=\sqrt{2\cosh(4r)}, \]
	and on the other hand, obviously 
	\[ \min_{\gamma\in\Gamma\setminus\left<-I_2\right>}\norm\gamma\geq\sqrt{N^2+2}, \]
	so $ C_r $ satisfies \ref{thm:019:2:2} if 
	\begin{equation}\label{eq:040}
	\sqrt{2\cosh(4r)}<\sqrt{N^2+2}.
	\end{equation}
	Next, one checks easily that (for every $ f:\calH\to\C^p $)
	\[ \begin{aligned}
	&\norm{f(\kappa_{\theta_1}h_t\kappa_{\theta_2}.i)}\,\abs{j(\kappa_{\theta_1}h_t\kappa_{\theta_2},i)}^{-k}\\
	&\qquad=\norm{f\left(\frac{e^t\,i\,cos\theta_1-e^{-t}\,\sin\theta_1}{e^t\,i\,\sin\theta_1+e^{-t}\cos\theta_1}\right)}\,\abs{e^t\,i\,\sin\theta_1+e^{-t}\,\cos\theta_1}^{-k}
	\end{aligned} \]
	for all $ t\in\R_{\geq0} $ and $ \theta_1,\theta_2\in\R $. From this, it follows by an elementary computation, using \eqref{eq:038}, that 
	\[ \norm{f(\kappa_{\theta_1}h_t\kappa_{\theta_2}.i)}\,\abs{j(\kappa_{\theta_1}h_t\kappa_{\theta_2},i)}^{-k}=\frac{\tanh^\nu t}{(2\cosh t)^k}\,\norm u \]
	for all $ t\in\R_{\geq0} $ and $ \theta_1,\theta_2\in\R $.
	Thus, using \eqref{eq:039}, $ C_r $ satisfies \ref{thm:019:2:3} if and only if
	\begin{equation}\label{eq:041}
	\int_0^r\frac{\tanh^\nu t}{\cosh^kt}\,\sinh(2t)\,dt>\int_r^\infty\frac{\tanh^\nu t}{\cosh^kt}\,\sinh(2t)\,dt.
	\end{equation}
	The computation from \cite[proof of Proposition 6.7]{zunarManu} shows that there exists $ r\in\R_{>0} $ satisfying both \eqref{eq:040} and \eqref{eq:041} if and only if \eqref{eq:042} holds. This proves the theorem.
\end{proof}

\begin{remark}
	For concrete values of $ \nu $ and $ k $, it is easy to compute the value of the right-hand side in \eqref{eq:042} explicitly using mathematical software (e.g., in R 3.3.2, $ \M_{B(a,b)} $ is implemented as \texttt{qbeta(0.5,a,b)}). Moreover, \cite[Corollary 6.18]{zunarManu} lists a few elementary sufficient conditions on $ \nu $, $ k $ and $ N $ for the inequality \eqref{eq:042} to hold.
\end{remark}

\bibliographystyle{amsplain}

\end{document}